\newtheorem{proposition}{Proposition}[section]
\newtheorem{theorem}[proposition]{Theorem}
\newtheorem{corollary}[proposition]{Corollary}
\newtheorem{lemma}[proposition]{Lemma}
\newtheorem{remark}[proposition]{Remark}
\newtheorem{example}[proposition]{Example}
\newtheorem{algorithm}[proposition]{Algorithm}
\newcommand{\nc}{\newcommand}
\nc{\I}{{\bf 1}}
\nc{\bG}{{G}}
\nc{\bS}{{\mathbf S}}
\nc{\bN}{{\mathbf N}}
\nc{\bM}{{\mathbf M}}
\nc{\cB}{{\mathcal B}}
\nc{\cG}{{\mathcal G}}
\nc{\cS}{{\mathcal S}}
\nc{\cM}{{\mathcal M}}
\nc{\R}{{\mathbb R}}
\nc{\N}{{\mathbb N}}
\nc{\Z}{{\mathbb Z}}
\nc{\BP}{\mathbb{P}}
\nc{\BE}{\mathbb{E}}
\nc{\BQ}{\mathbb{Q}}
\numberwithin{equation}{section}
\begin{document}

\renewcommand{\thefootnote}{\fnsymbol{footnote}}
\author{{\sc G\"unter Last\thanks{Karlsruhe Institute of Technology,
    Germany. E-mail: \texttt{guenter.last@kit.edu}}
and Hermann Thorisson\thanks{University of Iceland, Iceland. 
E-mail: \texttt{hermann@hi.is}}}}
\title{Transportation of diffuse random measures  on $\R^d$}
\date{\today}
\maketitle
\begin{abstract} 
%%%%%
\noindent 
We consider two jointly stationary and ergodic random
measures $\xi$ and $\eta$ on $\R^d$ with equal finite intensities, 
assuming $\xi$ to be diffuse (non-atomic).
An allocation is a  
random mapping taking 
$\R^d$ to $\R^d\cup\{\infty\}$ in a translation invariant way.
We construct allocations transporting  the diffuse
 $\xi$ to  arbitrary 
 $\eta$, 
under the mild condition of existence of an ${ ` }$auxiliary'  point process 
which is needed only in the case when $\eta$ is diffuse. 
When that condition does not hold we show by a counterexample that an allocation 
transporting  $\xi$ to $\eta$ need not exist.\end{abstract}

\noindent
{\bf Keywords:} stationary random measure, point process,
invariant allocation, invariant transport, Palm measure, 
shift-coupling, stable allocation.

\vspace{0.2cm}
\noindent
{\bf AMS MSC 2020:} Primary 60G57, 60G55;  
Secondary 60G60.

%%% OLD

\section{Introduction}

Mass transportation is an important and lively research area. We refer
to \cite{Villani09} for an extensive monograph on optimal transports.
A more
recent addition to the literature is 
transports between random measures 
(and in particular balancing allocations),  
which connects to the classical topic
in several ways. For instance it was shown in \cite{HoHolPe06,LaTho09}
that balancing transports between stationary random measures
correspond to certain couplings (shift-couplings) of the associated Palm measures,
while \cite{HuesSturm13} studied quantitative optimality
of a balancing allocation. 

Before going further 
we need 
to establish some notation and terminology.
An {\em allocation} 
is a 
random mapping 
$\tau: x\mapsto \tau(x)$ taking
$\R^d$ to $\R^d\cup\{\infty\}$ 
in a translation-invariant (equivariant, covariant) way; 
note that in this paper the invariance is included in 
the definition of allocation.
Let  $\xi$ and $\eta$ be jointly stationary
and ergodic random
measures on $\R^d$ ($d \geq 1$) with finite identical intensities 
$\lambda_\xi = \lambda_\eta$. 
Joint ergodicity  of $\xi$ and $\eta$ means that the distribution of $(\xi,\eta)$ takes only the values 
$0$ and $1$ on translation  invariant sets, and joint
 stationarity 
 means 
that (with  $\overset{D}{=}$  denoting identity in distribution) 
$$ 
 (\xi(x+ \cdot), \eta(x+ \cdot ))    \,\overset{D}{=}\, (\xi,\eta),
 \quad x \in\R^d.
 $$
 Say that an allocation 
{\em balances} the {\em source} $\xi$ and the
{\em destination} 
$\eta$ if 
it transports  $\xi$ to $\eta$, that is,
if (a.s.)\ the image of the measure
$\xi$ under $\tau$ is $\eta$, $$\xi(\tau \in \cdot)=\eta.$$
See~Section~\ref{secprelim} for exact framework and definitions. 
See Remark \ref{history}  for historical notes beyond those in this introduction.

In the present paper we construct allocations
balancing a {\em diffuse} (a {\em non-atomic}) 
source $\xi$
and an
{\em arbitrary} destination $\eta$. In order to explain the 
point
 of the paper, let us 
outline two remarkable
examples.

\vspace{5mm}

\noindent
{\bf Extra head:}
The search for balancing allocations goes back to Liggett's 
surprising idea of `how to choose a head at random' -- an {\em extra head}  --
in a two-sided sequence of i.i.d.\  coin tosses.
If there is a head at the origin, it is an extra head 
(the other coins are i.i.d.). 
If there is a tail at the origin, 
move the origin to the right counting heads and tails until you have more heads than tails.\
Then you are at a head and it is an extra head; see \cite{L02}.

This can be restated as follows. 
If $\eta$ is the simple point process (on $\Z$) 
formed by the heads,
and $T$ is the location of the extra head, then $\eta(T + \cdot)$ has 
the same distribution as the Palm version $\eta^\circ$   of $\eta$,
\begin{align}\label{extra}
\eta(T+\cdot)  \,\overset{D}{=}\, \eta^\circ.
\end{align}
This means that we have constructed a {\em shift-coupling} of
 $\eta$ and $\eta^\circ$.

Allocations provide a proof of this result as follows. 
For each $n \in \Z$ let $\tau(n)$
be the location of the `extra head' found 
when starting from $n$ rather than from the origin $0$. 
Then the map $\tau: \Z \to \Z \cup \infty$ is an allocation
that leaves the heads where they 
are, while $\tau$ turns out to be  (a.s.) a bijection from the tails to the heads.
Thus if we let $\mu$ be the counting measure on $\Z$ 
then $\tau$ balances $\mu$ and $2\eta$ 
(or, equivalently,  $\tau$ balances $\mu$/2 and $\eta$).
According to 
\cite{HoHolPe06,
LaTho09}, 
this implies that \eqref{extra} holds.

\vspace{5mm}

\noindent
{\bf Stable marriage of Poisson and Lebesgue:}
Liggett's idea led Hoffman, Holroyd and Peres to solving an open
problem from the mid nineties: how to find an {\em extra point} in a
Poisson process. Let $\eta$ be a stationary Poisson process on $\R^d$
with intensity $1$.  Associate disjoint cells of volume $1$ to the
points of $\eta$ as follows.  Expand balls simultaneously from all the
points of $\eta$.  If the ball of a particular point has accumulated
volume $1$ before it hits another ball, then this ball is the cell of
that point. If not, then continue expanding to accumulate volume when
reaching space that has not already been reached by another ball; stop
when volume $1$ has been accumulated. It turns out (a.s.) that in this
way each point obtains a cell of volume $1$ and that the cells are
disjoint and cover $\R^d$.  Now let $T$ be the point of the cell
containing the origin.  If the origin is shifted to $T$ and that point
is removed then the remaining points of $\eta(T+\cdot)$ form a
stationary Poisson process with intensity $1$. Thus $T$ is an extra
point; see \cite{HoHolPe06}.

This 
again means that we have constructed a shift-coupling of
 $\eta$ and its Palm version $\eta^\circ$, that is, \eqref{extra} holds.
 And again, allocations  
provide a proof of that result as follows.\ 
For each $x \in \R^d$ let $\tau(x)$ be the point of the cell containing~$x$
and let $\mu$ be Lebesgue measure on $\R^d$.\
Then $\tau$ is (clearly) an allocation balancing $\mu$ and~$\eta$.
According to 
\cite{HoHolPe06,
LaTho09}, 
this implies that \eqref{extra} holds.

\vspace{5mm}

What makes allocations particularly interesting are the shift-coupling 
 results in the above examples.
 In both examples the source $\mu$ is  translation-invariant and non-random,
but the shift-coupling result  
extends to general $\xi$ and $\eta$ as follows.
If a stochastic process 
(or a random measure)   $X$
is stationary and ergodic jointly 
with $\xi$ and $\eta$,  and
$\tau$ transports 
$\xi$ to
 $\eta$, then by shifting the origin to $\tau(0)$ 
the {\em Palm version} of  
$X$ w.r.t.\ $\xi$ 
turns into the 
Palm version of   
$X$
w.r.t.\ $\eta$.
See \eqref{shift-coupling2} in the next section,  
and \cite{LaTho09}, for this result. 
See  \cite{AldousThor,Thor96}
for the origin of shift-coupling.

\vspace{5mm}

\noindent
{\bf Unbiased Skorokhod embedding:}
In \cite{LaMoeTho12, LaTaTho18} it is shown in the one-dimensional case, $d=1$, 
that if $\xi$ is diffuse (non-atomic), $\eta$ is arbitrary,         
and $\xi$ and $\eta$
are mutually singular  
 then they are balanced by
the allocation $\tau$ defined by
\begin{align}\label{alocation}
\tau(x) = \inf\{t>x : \xi([x,t]) \leq \eta([x, t]\},\quad x\in\R.
\end{align}
Local times of Brownian motion are diffuse 
and  (see \cite{GeHo73}) the two-sided standard Brownian motion $(B_x)_{x\in\R}$
 is a Palm version of the $\sigma$-finite stationary Brownian measure. Thus (see  \cite{LaMoeTho12}) the shift-coupling result for general $\xi$ and $\eta$ 
  can be applied 
with $\xi = \ell^0$ the local time at $0$  and 
with 
$\eta = \int\ell^y\nu(dy)$
where $\ell^y$ is the local time at 
$y$ and 
where
$\nu$ is a probability measure without atom at $0$
(to ensure mutual singularity).
This yields the following  {\em unbiased}  Skorokhod embedding: 
 $(B_{\tau(0)+x})_{x\in \R}$ is a two-sided standard Brownian motion
 with distribution $\nu$ at $x=0$. 
 It is said to be {\em unbiased}  
  because 
  not only the one-sided $(B_{\tau(0)+x})_{x\geq 0}$,  
but  also the two-sided $(B_{\tau(0)+x})_{x\in \R}$, is Brownian. 
The same approach results in various 
 embeddings when 
applied to local times associated with Brownian motion, 
e.g.\ extra excursion (see  \cite{LaTaTho18})
and (see \cite{PitmanTang15}) extra Brownian bridge.

\vspace{5mm}

 In the present paper we consider the $d$ dimensional case
when $\xi$ is diffuse, $\eta$ is arbitrary,          
and $\xi$ and $\eta$ need
{\em  not} be mutually singular. 
It turns out that there are  special cases where balancing allocations 
do {\em not} exist (Section~\ref{seccounter}).
In order to guarantee the existence of a balancing allocation 
we impose the mild condition of the existence of a 
non-zero simple point process $\chi$
on $\R^d$ with  finite intensity $\lambda_\chi$ and
such that  $\xi$, $\eta$ and $\chi$
are jointly stationary and ergodic. 
We call this simple point process $\chi$ {\em auxiliary}. 

The following theorem is the main result of the paper. 
Note that the auxiliary $\chi$ is only needed when $\eta$ is purely diffuse.

\begin{theorem}\label{main} 
Assume that $\xi$ and $\eta$
are 
jointly stationary and ergodic random measures on $\R^d$.
Let $\xi$ be diffuse (non-atomic) and 
$$
0<\lambda_\xi=\lambda_\eta<\infty.
$$ 
Then there exists an allocation balancing $\xi$ and $\eta$ if one of the following conditions holds:

{\em (a)} $\eta$ has a non-zero discrete component;

{\em (b)} $\eta$ is diffuse and there exists an auxiliary  $\chi$.
\end{theorem}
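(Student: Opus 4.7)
My plan is to treat both cases through a common construction that exploits a stationary simple point process $N$ jointly stationary and ergodic with $(\xi,\eta)$: in case (a) take $N=\supp(\eta_d)$, where $\eta_d$ is the discrete component of $\eta$, and in case (b) take $N=\chi$. In both settings $N$ has positive finite intensity, and the substantive content lies in balancing a diffuse source against a diffuse target using $N$ as a discretising auxiliary.

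For the diffuse--to--diffuse core (case (b)), I would first use $N$ to produce an invariant bounded partition $(C_x)_{x\in N}$ of $\R^d$, via a stable allocation of Lebesgue measure to $N$ in the spirit of \cite{HoHolPe06}. Condensing $\xi$ and $\eta$ into the two weighted point processes
$$\xi^\flat := \sum_{x\in N}\xi(C_x)\delta_x,\qquad \eta^\flat := \sum_{x\in N}\eta(C_x)\delta_x,$$
reduces the problem to balancing two jointly stationary weighted point processes on $N$ of common intensity $\lambda_\xi$. I would construct a balancing $\sigma$ between $\xi^\flat$ and $\eta^\flat$ using a Gale--Shapley / stable-marriage argument adapted to fractional claims (each $x\in N$ has a claim of size $\xi(C_x)$ and a demand of size $\eta(C_x)$), and then lift $\sigma$ to a genuine allocation $\tau$ on $\R^d$: this lifting is possible precisely because $\xi$ is diffuse, so the $\xi$-mass inside each $C_x$ can be redirected to destinations in the $C_y$'s by an invariant measurable rule.

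For case (a), decompose $\eta=\eta_c+\eta_d$ and, using the cells of $N=\supp(\eta_d)$, cut $\R^d$ into two invariant regions carrying $\xi$-mass of intensities $\lambda_{\eta_d}$ and $\lambda_{\eta_c}$ respectively (selecting in each cell a sub-region of prescribed $\xi$-content, rather than scaling $\xi$, which would violate single-valuedness of the allocation). Balance the first piece with $\eta_d$ by a diffuse-to-atoms stable allocation of Hoffman--Holroyd--Peres type, and balance the second piece with the diffuse $\eta_c$ by invoking case (b) with auxiliary $N$. Concatenating the two allocations on their disjoint source regions yields $\tau$.

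The main obstacle is the construction of the $N$-equivariant allocation $\sigma$ balancing the two weighted point processes $\xi^\flat,\eta^\flat$: the unweighted extra-head case is classical, but here the weights are arbitrary non-negative reals, so a fractional stable-matching scheme has to be set up and its a.s.\ solvability argued from the joint ergodicity of $(\xi,\eta,N)$ together with the intensity balance $\lambda_\xi=\lambda_\eta$. A secondary care is needed in case (a) to effect the $\xi$-split through an invariant partition of $\R^d$ rather than by multiplying $\xi$ by scalars, so that the resulting $\tau$ remains a single-valued map on $\R^d\to\R^d\cup\{\infty\}$.
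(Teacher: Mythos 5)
The first genuine gap is in case (a): you set $N=\supp(\eta_d)$, but the support of the discrete component of $\eta$ need not be locally finite --- as the paper points out, it can even be dense in $\R^d$ --- in which case $\supp(\eta_d)$ is not a simple point process, has no sensible cells, and the Hoffman--Holroyd--Peres machinery you invoke does not apply to it. The paper circumvents this by slicing $\eta_d=\sum_n\eta_n$, where $\eta_n$ carries the atoms of mass in $[1/n,1/(n-1))$, so that each layer has isolated atoms, and by allocating the remaining $\xi$-mass to the layers recursively (Section~\ref{secdiscrete}); the auxiliary point process for the diffuse remainder is then obtained by \emph{thresholding} the atom masses as in \eqref{i3}, not by taking the full support. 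Relatedly, you treat the ``diffuse-to-atoms stable allocation of Hoffman--Holroyd--Peres type'' as available off the shelf, but for a general diffuse source this is itself one of the paper's main technical contributions (Algorithm~\ref{a11.5}): a diffuse $\xi$ may charge spheres, so the growing ball can overshoot at its stopping radius, and cells of exact $\xi$-content are only achieved by the recursion down the dimensions over boundary caps. The same boundary-mass issue affects your plan of ``selecting in each cell a sub-region of prescribed $\xi$-content'', which you state without a construction.

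In case (b) your route is genuinely different from the paper's, but it leaves its central step unproven: the invariant fractional stable matching $\sigma$ between the weighted processes $\xi^\flat$ and $\eta^\flat$, which you yourself identify as the main obstacle and do not carry out. Such stable transport kernels between general stationary random measures do exist in the literature (they are constructed in \cite{OmidAli16}), so your route could plausibly be completed by citation, and the within-cell lifting could be done by an explicit device; but as written the crux is missing. The paper avoids the matching problem altogether: after scaling so that $\lambda_\xi=\lambda_\eta=\lambda_\chi$, it applies the extended stable allocation of Proposition~\ref{plfdisdest} \emph{twice}, sending $\xi$ to $\chi$ and $\eta$ to $\chi$, which pairs at each point $s\in\chi$ a $\xi$-cell $A_s$ and an $\eta$-cell $B_s$, both of mass one --- no matching between weights is needed --- and then transports $\xi$ on $A_s$ onto $\eta$ on $B_s$ through a measurable bijection $\phi\colon\R^d\to\R$ via the quantile map $\phi^{-1}(G_t^{-1}(F_t(\phi(\cdot))))$, using diffuseness of $\xi$ precisely to make $F_t$ continuous. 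Your ``invariant measurable rule'' for redirecting $\xi$-mass inside cells needs exactly such an explicit construction; without it, and without the fractional matching, the proposal is an outline of a feasible alternative strategy rather than a proof.
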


Condition (a) covers discrete $\eta$ and, in particular, point processes. 
Note that under condition (a) an auxiliary $\chi$ always exists and can  be chosen 
as a {\em factor} of $\eta$, that is, as a measurable and equivariant
(w.r.t.\ translation) function of $\eta$. 
When the discrete component of $\eta$
has isolated atoms then $\chi$ can be taken to be 
the support of $\eta$. 
And although in general the support of the discrete component 
need not consist of isolated points (it can even be dense), 
there exists a constant $c > 0$  such that the following simple point process 
(here $\delta_x$ is the 
measure with mass $1$ at $x$)
\begin{align}\label{i3}
\chi = \sum_x  \I\{\eta(\{x\})> c\} \delta_x
\end{align}
is non-zero. 

Under condition (b), there are also cases where an auxiliary $\chi$ exists 
as a factor of $(\xi,\eta)$. But the counterexample in Section 8 
shows that the condition of the existence of an auxiliary $\chi$ cannot simply be removed from (b). 
There are diffuse $\xi$ and $\eta$ such that an allocation transporting $\xi$ to $\eta$ does not exist. 
However, if extension of the underlying probability space
$(\Omega, \mathcal{F}, \BP)$ is allowed, then that obstacle can be overcome.

\begin{corollary}\label{cmain}
Assume that $\xi$ and $\eta$
are
jointly stationary and ergodic random measures on $\R^d$. 
Let $\xi$ be diffuse and $$
0<\lambda_\xi=\lambda_\eta<\infty.
$$ 
Extend $(\Omega, \mathcal{F}, \BP)$ to support a stationary Poisson process
 $\chi$ on $\R^d$
which is independent of $\xi$ and~$\eta$.
Then there exists an allocation balancing $\xi$ and $\eta$.
\end{corollary}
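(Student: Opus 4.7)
The plan is to reduce Corollary \ref{cmain} directly to Theorem \ref{main}, using the extension only to manufacture an auxiliary point process in the one case in which none is available for free.

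First I would dispense with the case in which $\eta$ has a non-zero discrete component, since Theorem \ref{main}(a) then already yields a balancing allocation on the original probability space without any extension. So I assume $\eta$ is diffuse and aim to apply Theorem \ref{main}(b). For this it suffices to exhibit, on the extended space, a non-zero simple point process $\chi$ of finite intensity such that $\xi$, $\eta$, and $\chi$ are jointly stationary and ergodic. I would take $\chi$ to be the stationary Poisson process of intensity $1$ provided by the extension; it is a.s.\ simple, a.s.\ non-zero, and has finite intensity $1$.

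Joint stationarity of $(\xi, \eta, \chi)$ is immediate from the stationarity of $(\xi, \eta)$ and of $\chi$ combined with their independence. The only substantive step is joint ergodicity. I would establish it by appealing to the classical fact that the stationary Poisson process is mixing under the translation action of $\R^d$ (a consequence of the independence of Poisson counts on disjoint bounded Borel sets, verified by a monotone class argument), together with the general ergodic-theoretic principle that the independent joining of an ergodic system with a mixing system is again ergodic. Applied with $(\xi, \eta)$ and $\chi$ in the two slots, this gives joint ergodicity of $(\xi, \eta, \chi)$.

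Once the joint ergodicity is in hand, all hypotheses of Theorem \ref{main}(b) are verified on the extended probability space and it delivers an allocation $\tau$ balancing $\xi$ and $\eta$; the balancing relation $\xi(\tau\in\cdot) = \eta$ is a property of $\tau$ on the extended space and does not require $\chi$ to appear in its statement, so this is exactly the conclusion sought. The main (and essentially the only) obstacle is the joint-ergodicity step; the rest is a routine product-space construction for the extension and a direct invocation of Theorem \ref{main}.
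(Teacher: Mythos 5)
Your proposal is correct and is precisely the argument the paper intends: Corollary \ref{cmain} is stated without a separate proof, the implicit reduction being exactly yours --- Theorem \ref{main}(a) when $\eta$ has a non-zero discrete component, and otherwise the independent Poisson process serving as the auxiliary $\chi$, with joint ergodicity of $(\xi,\eta,\chi)$ on the product extension following because the stationary Poisson process is mixing under translations and the independent joining of an ergodic system with a mixing one is again ergodic. Your handling of that joint-ergodicity step, which is the only non-routine point, is sound.
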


As stressed above, an important property of a balancing allocation $\tau$ is the
shift-coupling at \eqref{shift-coupling2}.
 When specialised  to 
the case  
$\xi = \lambda_\eta$ times  
Lebesgue measure, then  \eqref{shift-coupling2}
turns into \eqref{shift-coupling2222},
that is,  
into \eqref{extra} 
with $T=\tau(0)$. 
This is called {\em extra head scheme} in \cite{HP05}. 
Note however that removing a `head' (or some pattern like an excursion)
 from $\eta^{\circ}$, does not
 result  in a copy of $\eta$ except in very special cases such as 
  coin tossing, the Poisson process and Brownian motion.

According to Theorem~\ref{main} applied with 
$\xi = \lambda_\eta$ times 
Lebesgue measure,
the result \eqref{extra} always holds with $T$ a function of $\eta$
alone, unless $\eta$ is diffuse. In that case, 
 \eqref{extra} still holds with $T$ a function of $\eta$,
provided there exists an auxiliary $\chi$ that is a factor of $\eta$. 
Finally, according to Corollary~\ref{cmain}, if external randomisation is allowed then \eqref{extra} 
always holds for some~$T$, also when $\eta$ is diffuse.
(Actually, due to ergodicity and the definition of Palm probabilities at  \eqref{Palmmeasure}, 
the distributions of $\eta$ and $\eta^{\circ}$ have the same (zero-one)  values on invariant sets. 
Thus, according to an abstract 
existence result in \cite{Thor96} for shift-coupling on groups,
 \eqref{extra} always holds for some $T$ defined on an extended probability space.
But the  constructions of $T$ in the present paper are explicit.)

The plan of the paper is as follows. Section~\ref{secprelim} collects
some preliminaries on stationary random measures, 
balancing allocations, and Palm theory.
Section~\ref{secdiscretedest}  prepares for the proof of 
Theorem~\ref{main}. The theorem  is then proved in  
Sections~\ref{secatomsdest}-\ref{secmixeddest}, 
where 
allocations are constructed in four exhaustive cases: 
discrete $\eta$ with only isolated atoms in Section~\ref{secatomsdest}, 
discrete $\eta$ with some accumulating atoms in Section~\ref{secdiscrete}, 
diffuse $\eta$  ($\eta$ with no atoms)  in Section~\ref{secdifdest}, 
and $\eta$ 
 with discrete and  diffuse parts in Section~\ref{secmixeddest}.
We give algorithmically 
explicit constructions in the discrete 
cases, and less explicit in the diffuse.
Section~\ref{seccounter} proves by counterexample that 
the auxiliary $\chi$ cannot simply be removed from part (b) of Theorem~\ref{main}. 
Section~\ref{secrem} concludes with remarks.

\section{Preliminaries}\label{secprelim}

Let $(\Omega,\mathcal{F},\BP)$ be a probability space
with expectation operator $\BE$.
A {\em random measure} (resp.\ {\em point process}) $\xi$ on $\R^d$ (equipped
with its Borel $\sigma$-field $\mathcal{B}(\R^d)$) is a kernel 
from $\Omega$ to $\R$ such that $\xi(\omega,C)<\infty$
(resp.\ $\xi(\omega,C)\in\N_0$)
for $\BP$-a.e.\ $\omega$ and all compact $C\subset\R^d$; see
e.g.\ \cite{Kallenberg,LastPenrose17}.
A point process $\xi$ is called {\em simple} if
$\xi(\{x\})\in\{0,1\}$, $x\in\R^d$, except on a set
with probability zero.
Further, let
$(\Omega,\mathcal{F})$ be 
equipped with a {\em measurable flow}
$\theta_x\colon\Omega \to \Omega$, $x\in \R^d$. This is a family
of  mappings such that $(\omega,x)\mapsto \theta_x\omega$ 
is measurable, $\theta_0$ is the identity on $\Omega$ and
\begin{align}\label{flow}
  \theta_x \circ \theta_y =\theta_{x+y},\quad x,y\in \R^d,
\end{align}
where $\circ$ denotes composition.
An {\em allocation} \cite{HP05,LaTho09} is a measurable mapping
$\tau\colon\Omega\times\R^d\rightarrow\R^d\cup\{\infty\}$ that is {\em equivariant}
in the sense that
\begin{align}\label{allocation}
\tau(\theta_y\omega,x-y)=\tau(\omega,x)-y,
\quad  x,y\in\R^d,\, \text{$\BP$-a.e.\ $\omega\in\Omega$}.
\end{align}
We illustrate these concepts with a simple but illustrative example.

\begin{example}\rm
Take $\Omega$
as the space of all locally finite  sets $\omega\subset\R^d$,
equipped with the usual $\sigma$-field and define 
$\theta_x\omega:=\omega-x$, $x\in\R^d$.
The counting measure $\xi(\omega)$ supported by $\omega$ defines
a (discrete) random measure $\xi$. An example of an allocation $\tau$
is to take $\tau(\omega,x)$ as the point of $\omega\in\Omega$
closest to $x\in\R^d$, using lexicographic order to break ties.
(For $\omega=\emptyset$ we set $\tau(\omega,\cdot)\equiv\infty$.)
\end{example}

We assume that the measure $\BP$ is {\em stationary}; that is
$$
\BP\circ\theta_x=\BP,\quad x\in\R^d,
$$
where $\theta_x$ is interpreted as a mapping from $\mathcal{F}$ to $\mathcal{F}$
in the usual way:
$$
\theta_xA:=\{\theta_x\omega:\omega\in A\},\quad A\in\mathcal{F},\, x\in\R^d.
$$
A random measure $\xi$ on $\R^d$ is said to be 
{\em stationary}  if
\begin{align}\label{adapt}
\xi(\theta_x\omega,C-x)=\xi(\omega,C),\quad C\in\mathcal{B}(\R^d),\,x\in\R^d,
\BP\text{-a.e.\ $\omega\in\Omega$}.
\end{align}
Abusing our notation by defining the shifts $\theta_x$, $x\in\R^d$,
also for measures on $\R^d$
in the obvious way, we obtain from \eqref{adapt} and stationarity of $\BP$ that
$$
\theta_x\xi \overset{D}{=}\xi, \quad x\in \R^d.
$$
The {\em invariant} $\sigma$-field $\mathcal{I}\subset \mathcal{F}$ is the
class of all sets $A\in\mathcal{F}$ satisfying $\theta_xA=A$ for all $x\in\R^d$.
We also assume that $\BP$ is {\em ergodic}; that is for any
$A\in\mathcal{I}$, we have $\BP(A)\in\{0,1\}$ (see however Remark~\ref{ergod}).

Let  $\xi$ be a stationary random measure on $\R^d$ 
with positive and finite  {\em intensity} $$\lambda_\xi:=\BE\xi[0,1]^d.$$
The {\em Palm probability measure} $\BP_\xi$ of $\xi$
(with respect to $\BP$) is defined by
\begin{align}\label{Palmmeasure} 
\BP_\xi(A):=\lambda_\xi^{-1}\lambda_d(B)^{-1}\,\BE \int \I_B(x)\I_A(\theta_x)\, \xi(dx),
\quad A\in\mathcal{F},
\end{align}
where $B\subset\R^d$ is a Borel set with positive and finite Lebesgue 
measure $\lambda_d(B)$. This definition does not depend on $B$.
The expectation operator associated with $\BP_\xi$ is denoted by $\BE_\xi$.
Any multiple $c\lambda_d$ of  Lebesgue measure
 is a (rather trivial) stationary random
measure. In this case we obtain from stationarity of $\BP$ that 
\begin{align}\label{LebPalm}
\BP_{c\lambda_d} = \BP.
\end{align}

An allocation $\tau$ {\em balances} two random measures $\xi$ and 
$\eta$ if
\begin{align}\label{bal456}
\BP(\xi(\{s\in\R:\tau(s)=\infty\})>0)=0
\end{align}
and the image measure of $\xi$ under $\tau$  is $\eta$, that is, 
\begin{align}\label{bal123}
\int \I\{\tau(s)\in C\}\,\xi(ds)=\eta(C),\quad C\in\mathcal{B}(\mathbb{R}^d),\, \BP\text{-a.e.}
\end{align} 
The balancing properties \eqref{bal456} and \eqref{bal123} imply easily that
\begin{align}\label{i2}
\lambda_\xi= \lambda_\eta.
\end{align}
By \cite[Theorem 4.1]{LaTho09} we then have the {\em shift-coupling}
\begin{align}\label{shiftc}
\BP_\xi(\theta_{\tau(0)}\in\cdot)=\BP_\eta.
\end{align}

\begin{remark}\label{Palmprel}\rm
In this remark we consider the shift-coupling result
 \eqref{shiftc}
 in terms of random elements. 
 Let $X$ be a random element that can be translated  
by $t\in \R^d$, for instance a random measure, or a random field, 
or the identity on $\Omega$.
Then $X$, $\xi$ and $\eta$,
defined on the probability space $(\Omega,\mathcal{F},\BP)$  above,
 are jointly stationary and ergodic.

A {\em Palm version of $X$ w.r.t.\ $\xi$} is any random element 
with the distribution $\BP_\xi(X\in\cdot)$. 
In particular, according to \eqref{LebPalm}, 
 a Palm version of $X$ w.r.t.\  Lebesgue measure is $X$ itself. 
 What is generally called a  {\em Palm version} of a random measure 
 $\xi$
  is a random measure $ \xi^{\circ} $ with the distribution $\BP_\xi(\xi\in\cdot)$. 
  That is, $ \xi^{\circ} $ is a Palm version of $\xi$ with respect to itself. 
 
 The informal interpretation of $ \xi^{\circ} $ in the case when $ \xi $ is a simple 
  point process is that $ \xi^{\circ} $ behaves like $ \xi$ 
  conditioned on having a point at the origin.
 When 
  $ \xi$ is a Poisson process 
  then  (and only then) $ \xi^{\circ} $ can be obtained simply
  by placing an extra point at the origin, $ \xi^{\circ} = \xi + \delta_0$.
   In the ergodic case, which is assumed here, another 
  informal interpretation
  of  $ \xi^{\circ} $  is 
  that  $ \xi^{\circ} $ behaves  like $\xi$ with origin shifted to 
  a uniformly chosen point of $\xi$, -- or when $\xi$ is
  not a simple point process, 
  to a uniformly chosen location in the 
  mass of $\xi$. 
  (These interpretations are motivated by limit theorems.)

 From \eqref{shiftc} we obtain  
 the following shift-coupling result.  
 If $\tau$ is an allocation balancing  
$\xi$ and $\eta$, then a shift of the origin to $\tau(0)$
turns a Palm version of $X$ w.r.t.\ $\xi$  into 
a Palm version of $X$ w.r.t.\ $\eta$,
\begin{align}\label{shift-coupling2}
\BP_\xi(\theta_{\tau(0)}X\in\cdot)=\BP_\eta(X\in \cdot). \end{align}
In particular, from  
 \eqref{LebPalm} and \eqref{shift-coupling2} 
  with $X = \eta$, we obtain the following 
   result.  
If $\tau$ balances  the Lebesgue-measure multiple $\lambda_\eta\lambda_d$  and $\eta$, 
then a shift of the origin to $\tau(0)$
turns the stationary $\eta$
into a Palm version $\eta^\circ$ of $\eta$,
\begin{align}\label{shift-coupling2222}
\theta_{\tau(0)} \eta
 \,\overset{D}{=}\, \eta^{\circ}.
 \end{align}
On the other hand, 
 from  
 \eqref{LebPalm} and \eqref{shift-coupling2}
  with $X = \xi$, we obtain  the reverse result.
 If $\tau$ balances  
$\xi$ and  the Lebesgue-measure multiple $\lambda_\xi\lambda_d$,
 then a shift of the origin to $\tau(0)$
  turns the  a Palm version $\xi^\circ$ of $\xi$ 
into the stationary $\xi$, 
\begin{align}\label{shift-coupling2224}
\theta_{\tau(0)} \xi^\circ
 \,\overset{D}{=}\, \xi.
\end{align}
If 
$\xi$ is the only source of randomness,
then, as a rule, allocations  balancing 
$\xi$ and a multiple of Lebesgue measure do not exist, see \cite{HP05} and also 
Section \ref{seccounter}. 
\end{remark}

\section{Allocations to Discrete Random Measures}\label{secdiscretedest}

Let $\xi$ and $\eta$ be two jointly stationary and ergodic random measures
on $\R^d$; see Section~\ref{secprelim}. For the remainder of
  this paper we assume $\lambda_\xi>0$ and $\lambda_\eta>0$.  
Assume that $\eta$ is a discrete random measure with locally finite support.
Let $\eta^*$ be the simple point process with the same support as $\eta$.
Assume also that $\xi$, $\eta$ and $\eta^*$ have positive and finite
intensities $\lambda_\xi$, $\lambda_\eta$ and $\lambda_{\eta^*}$ respectively.
(The assumptions  $\lambda_{\eta^*}<\infty$ has been made by convenience and could be removed.)
We consider an allocation $\tau$ with the property
\begin{align}\label{e3.59}
\xi(\{z\in\R^d:\tau(z)\notin \eta^*\cup\{\infty\}\})=0,\quad \BP\text{-a.s.}
\end{align}
Define
\begin{align*}
C^\tau(z):=\{y\in\R^d\colon\tau(y)=z\}, \quad z\in\R^d.
\end{align*}
Note that $C^\tau(z)$ is random.

Let $\alpha\in(0,\infty)$. 
The allocation $\tau$ is said to have {\em appetite} $\alpha$ 
(w.r.t.\ $(\xi,\eta)$) if \eqref{e3.59} and the following two properties hold.
First we have almost surely that 
\begin{align}\label{e11.45}
\eta^*(\{x\in\R^d:\xi(C^\tau(x))>\alpha\eta\{x\}\})=0.
\end{align}
Second the probability that
\begin{align}\label{e11.46}
\xi(\{z\in\R^d:\tau(z)= \infty\})>0\quad\text{and}\quad
\eta^*(\{x\in\R^d:\xi(C^\tau(x))<\alpha\eta\{x\}\})>0
\end{align}
is zero.

\begin{proposition}\label{p1} 
Assume that the allocation $\tau$
has appetite $\alpha$ for some $\alpha\in (0,\lambda_\xi\lambda_\eta^{-1}]$.
Then $\tau$ is $\alpha$-balanced, that is we have a.s.\ that
$\eta(\{x\in\R^d:\xi(C^{\tau}(x))\ne \alpha\eta\{x\}\})=0$.
Moreover, we have that $\lambda_\xi\BP^0_\xi(\tau(0)\ne\infty)=\alpha\lambda_\eta$.
\end{proposition}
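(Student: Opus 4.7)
The plan is to set up a two-sided accounting of the $\xi$-mass transported by $\tau$ into the unit cube, and then to use ergodicity to promote a one-sided inequality into equality.

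Set $p:=\BP_\xi(\tau(0)=\infty)$. Since \eqref{allocation} gives $\tau(\theta_y\omega,z-y)=\infty$ if and only if $\tau(\omega,z)=\infty$, the indicator $z\mapsto\I\{\tau(z)=\infty\}$ is equivariant, so the refined Campbell formula for $\xi$ yields
\[
\BE\,\xi\bigl(\{z\in[0,1]^d:\tau(z)=\infty\}\bigr)=\lambda_\xi p,
\]
and hence the expected $\xi$-mass leaving $[0,1]^d$ not through $\infty$ equals $\lambda_\xi(1-p)$. By \eqref{e3.59} the pushforward of this mass under $\tau$ is the stationary random measure $\xi^\tau:=\sum_{x\in\eta^*}\xi(C^\tau(x))\delta_x$, and the mass-transport principle for allocations (cf.\ \cite{LaTho09}), which equates the expected incoming and outgoing mass for a unit cube, then produces
\[
\BE\sum_{x\in\eta^*\cap[0,1]^d}\xi(C^\tau(x))=\lambda_\xi(1-p).
\]

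Combining this with \eqref{e11.45} and $\BE\,\eta([0,1]^d)=\lambda_\eta$ gives the inequality $\lambda_\xi(1-p)\le\alpha\lambda_\eta$. To upgrade this into an equality, I would introduce the two events
\[
E:=\{\eta^*(\{x:\xi(C^\tau(x))<\alpha\eta\{x\}\})>0\},\qquad F:=\{\xi(\{z:\tau(z)=\infty\})>0\}.
\]
Equivariance of $\xi$, $\eta$, $\eta^*$ and $\tau$ renders both events $\theta$-invariant, so by ergodicity each has probability in $\{0,1\}$, while \eqref{e11.46} forces $\BP(E\cap F)=0$.

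In the case $\BP(E)=0$, combining with \eqref{e11.45} yields $\xi(C^\tau(x))=\alpha\eta\{x\}$ for $\eta^*$-a.e.\ $x$ almost surely --- this is the $\alpha$-balance --- and integrating reads off $\lambda_\xi(1-p)=\alpha\lambda_\eta$. In the complementary case $\BP(F)=0$, so $p=0$, and the inequality above becomes $\lambda_\xi\le\alpha\lambda_\eta$; the standing hypothesis $\alpha\le\lambda_\xi/\lambda_\eta$ then forces the equality $\alpha\lambda_\eta=\lambda_\xi$, and the resulting identity $\BE\sum_{x\in\eta^*\cap[0,1]^d}(\alpha\eta\{x\}-\xi(C^\tau(x)))=0$ for a non-negative integrand again yields $\alpha$-balance. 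The only delicate step is the mass-transport identity for $\xi^\tau$; the rest reduces to a short zero-one argument.
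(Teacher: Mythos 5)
Your proposal is correct, but it is organised differently from the paper's proof, and the difference is worth recording. The paper first derives a general Palm exchange identity, its equation \eqref{e3.1}, from Neveu's exchange formula applied to $h(\omega,x)=g(\omega,\theta_x\omega)\I\{\tau(\omega,0)=x\}$, then applies it with $g=\I_A$ for the invariant event $A$ (your $E$) and closes a single chain of inequalities --- using \eqref{e11.45}, ergodicity in the conditional form $\BE[\eta(B)\mid\mathcal{I}]=\lambda_\eta\lambda_d(B)$, and $\alpha\lambda_\eta\le\lambda_\xi$ --- whose collapse to equalities forces $\BP(A)=0$ by contradiction; the intensity identity then comes from \eqref{e3.1} with $g\equiv 1$. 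You never need the $\I_A$-restricted exchange formula: your cube-accounting identity $\BE\sum_{x\in\eta^*\cap[0,1]^d}\xi(C^\tau(x))=\lambda_\xi(1-p)$ is exactly \eqref{e3.1} specialised to $g\equiv 1$ (and your mass-transport derivation of it is sound, with \eqref{e3.59} guaranteeing that all transported $\xi$-mass lands on $\eta^*$), and in place of the restricted identity you use an ergodic zero--one dichotomy: $E$ and $F$ are invariant, \eqref{e11.46} says $\BP(E\cap F)=0$, so one of them is null, and each case closes by conservation, with \eqref{e11.45} supplying nonnegativity of $\alpha\eta\{x\}-\xi(C^\tau(x))$ and the hypothesis $\alpha\le\lambda_\xi\lambda_\eta^{-1}$ entering only in the case $\BP(F)=0$. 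Two points you should make explicit: the invariance of $E$ and $F$ holds only modulo null sets, since \eqref{allocation} is an a.e.\ statement (ergodicity still applies to a.s.-invariant events); and the vanishing of $\BE\sum_{x\in\eta^*\cap[0,1]^d}(\alpha\eta\{x\}-\xi(C^\tau(x)))$ yields balance only on the unit cube, which you then extend to $\R^d$ by a countable cover of translated cubes using stationarity. As for what each route buys: the paper's argument establishes the identity \eqref{e3.1} in full generality (flagged there as of independent interest) and, because it conditions on $\mathcal{I}$ rather than invoking a strict dichotomy, carries over directly to the non-ergodic relaxation of Remark~\ref{ergod}, where your events $E$ and $F$ need no longer be trivial and your two cases would have to be run conditionally on $\mathcal{I}$; your argument, in exchange, is more elementary --- no Palm calculus beyond the definition \eqref{Palmmeasure} --- and makes transparent that \eqref{e11.46} plus ergodicity is precisely the input ruling out simultaneous unsaturated atoms and lost mass.
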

{\em Proof:} We generalise the proof of \cite[Theorem 10.9]{LastPenrose17}.
We start with a general result, that might be of independent interest.
Let $g\colon \Omega\times\Omega\to[0,\infty)$. Then
\begin{align}\label{e3.1}
\lambda_\xi\BE_\xi\I\{\tau(0)\ne\infty\}g(\theta_0,\theta_{\tau(0)})
=\lambda_{\eta^*}\BE_{\eta^*}\int_{C(0)} g(\theta_x,\theta_0)\,\xi(dx),
\end{align}
where we abbreviate $C(z):=C^\tau(z)$, $z\in\R^d$.
This follows from Neveu's exchange formula (see e.g.\ 
\cite[Remark 3.7]{LastPenrose17}) applied to the function
$h(\omega,x):=g(\omega,\theta_x\omega)\I\{\tau(\omega,0)=x\}$
(and replacing $(\eta,\xi)$ by $(\xi,\eta^*)$). 

Let 
\begin{align*}
A:=\{\text{there exists $x\in\eta$ such that $\xi(C(x))<\alpha\eta\{x\}$}\}.
\end{align*}
This event is invariant.
It follows from \eqref{e11.46} that
\begin{align*}
\BP_\xi(A)=\BP_\xi(\{\tau(0)\ne\infty\}\cap A).
\end{align*}
Therefore we obtain from \eqref{e3.1} that
\begin{align*}
\lambda_\xi\BP_\xi(A)=\lambda_{\eta^*}\,\BE_{\eta^*}\I_A\xi(C(0)).
\end{align*}
By definition \eqref{Palmmeasure} of the Palm 
probability measure of $\eta^*$ we hence obtain
for each Borel set $B\subset\R^d$ with $0<\lambda_d(B)<\infty$
that
\begin{align}\label{e3.4}\notag
\lambda_\xi\BP_\xi(A)
&=\lambda_d(B)^{-1} 
\BE \int_B\I\{\theta_x\in A\}
\xi\circ\theta_x(C(0,\theta_x))\,\eta^*(dx)\\
&=\lambda_d(B)^{-1} \BE\I_A\int_B\xi(C(x))\,\eta^*(dx),
\end{align}
where we have used the  invariance of $A$ and 
\begin{align*}
\xi\circ\theta_x(C(\theta_x,0))&=\int\I\{\tau(\theta_x,y)=0\}\,\xi(\theta_x\omega,dy)
=\int\I\{\tau(y+x)=x\}\,\xi(\theta_x\omega,dy)\\
&=\int\I\{\tau(y)=x\}\,\xi(dy)=\xi(C(x)).
\end{align*}
Using \eqref{e11.45} and denoting the invariant $\sigma$-field by $\mathcal{I}$, this yields
\begin{align}\label{e3.5}
\lambda_\xi\BP_\xi(A)&\le \lambda_d(B)^{-1}\alpha\,\BE[\I_A\eta(B)]\\
\notag
&= \lambda_d(B)^{-1}\alpha\,\BE[\I_A\BE[\eta(B)\mid \mathcal{I}]]\\
\notag
&=\alpha\lambda_\eta\,\BP(A)\le \lambda_\xi\BP(A)=\lambda_\xi\BP_\xi(A), 
\end{align}
where we have used ergodicity to get the second equality (almost surely)
and the assumption $\alpha\le\lambda_\xi\lambda^{-1}_\eta$ to get the second inequality.
Therefore the above inequalities are in fact equalities. Hence
\eqref{e3.4} and the right-hand side of \eqref{e3.5} coincide, yielding that
\begin{align*}
\BE\I_A\int_B (\alpha\eta\{x\}-\xi(C(x))\,\eta^*(dx)=0.
\end{align*}
Taking $B\uparrow\R^d$ and using montone convergence (justified by \eqref{e11.45}), this yields
\begin{align*}
  \I_A\int(\alpha\eta\{x\}-\xi(C(x)))\,\eta^*(dx)=0,
\quad \BP\text{-a.s.}
\end{align*}
Hence we have $\BP$-a.s.\ on $A$ that $\xi(C(x))=\alpha\eta\{x\}$ for all $x \in \eta^*$.
By definition of $A$ this
is possible only if $\BP(A)=0$. This implies the first assertion.

To prove the second assertion we use \eqref{e3.1} with $g\equiv 1$ to obtain that
$\lambda_\xi\BP_\xi(\tau(0)\ne\infty)=\lambda_{\eta^*}\alpha\, \BE_{\eta^*}\eta\{0\}$.
Since it follows straight from the definitions that
$\lambda_{\eta^*}
\BE_{\eta^*}\eta\{0\}=\lambda_\eta$ we can conclude the assertion.
\qed

\section{Destination  Isolated Atoms}\label{secatomsdest} 

The spatial version of the Gale–Shapley allocation  
introduced in 
\cite{HoHolPe06} balances Lebesque measure to a simple point process. 
The simplified description of it in the introductory
Poisson-Lebesgue example 
 is not an effective way of proving that it actually works, 
 the efficient way is algorithmic.
We now extend this allocation to  balance a diffuse $\xi$ to an
$\eta$ consisting of  isolated atoms. The extension is needed because 
unlike the Lebesgue measure a diffuse measure can have positive 
mass on lower dimensional sets like the boundaries  of balls.
Motivated by the point-optimal stable allocation introduced
in \cite{HoHolPe06}, we formulate an algorithm providing
an allocation of appetite $\alpha$.

The idea behind the algorithm (in the case $\alpha =1$) can be sketched as follows. 
In the first round of the algorithm assign a {\em preference set} 
to each $\eta$-atom, that is, a set of sites in $\R^d$ that the atom {\em proposes} to. 
We do this by a finite recursion 
(note that the following  four items 
can be reduced to one item when $\xi$ is Lebesgue measure):

\begin{itemize}

\item    From each $\eta$-atom blow up a "first" closed ball until you have gathered  
    $\xi$-mass 
     at least equal to the mass of that atom, $m_1$ say.

\item Put $m_2 = m_1 $ minus the $\xi$-mass in the {\bf interior} of the "first" ball.  
This remaining mass $m_2$ is thus part of a $\xi$-mass sitting on the boundary of a $d$ dimensional ball.

\item

    Then blow up a "second" closed ($d$ dimensional) ball from (e.g.)\ the lexicographically lowest 
    location on the boundary of the "first" ball 
    (think of it as a pole) until you have gathered $\xi$-mass on the
    {\bf boundary} 
    of the "first" ball that is at least $m_2$. This mass $m_2$ is sitting on a closed cap of the 
   boundary of the "first" ball. Put $m_3 = m_2$ minus the $\xi$-mass of the
   interior of that cap (the relative interior of the cap w.r.t. the boundary, 
  the sphere).
   This remaining mass, $m_3$,  is a part of a mass sitting on 
    the boundary of a $d-1$ dimensional ball.

\item   

    Repeat this down the dimensions until the $\eta$-atom has a {\em preference set} 
  (the union of these interior sets) 
  of $\xi$-mass exactly $m_1$ (because the final cap will be  
   a circle segment and its boundary will have at most two points and their $\xi$-mass is zero 
   since $\xi$ is diffuse).
    Note that the preference sets of different $\eta$-atoms may overlap. 
    Note also that a preference set of an $\eta$-atom can contain other 
$\eta$-atoms. 
\end{itemize}
Now let each site that lies in at least one preference set 
of an $\eta$-atom put the closest of those atoms on a shortlist, using
lexicographic order to break ties.
The atoms associated with the other preference sets are rejected.
Each atom has now a rejection set, a subset of its preference 
set containing sites that rejected the proposal.

Repeat   the above procedure recursively 
by blowing up a ball around each $\eta$-atom restricted
to the complement of its associated rejection set
(thus extending its preference set)
and, after each round,  add the new rejections to its rejection set.
One of two things can happen for a site $z\in\R^d$. Either it
never appears in one of the preference sets. Then
$z$ has no partner and is allocated to $\infty$.
Or it eventually shortlists a
single point $x$. Then $z$ is allocated to $x$.

\bigskip

In the algorithm we will use the notation 
$D(t-) :=\bigcup_{s<t}D(s)$
for an increasing family of sets $D(s)\subset \R^d$, $s>0$.
If $\mu$ is a measure on $\R^d$ and $\mu\{x\}>0$
we write $x\in\mu$.

\begin{algorithm}\label{a11.5}\rm
Let $\alpha>0$, $\mu\ne 0$ be discrete with isolated atoms 
and $\nu$ be diffuse with infinite mass.
For $n\in\N_0$, $x\in\mu$ and $z\in\R^d$, 
define the sets 
\begin{align*}
C_n(x) &\subset \R^d\qquad  \text{(the
set of sites {\em claimed}, or {\em preferred}, by $x$ at stage $n$),}\\
R_n(x) &\subset \R^d  \qquad \text{(the set of sites
{\em rejecting} $x$ during the first $n$ stages),}\\
A_n(z) &\subset \mu \qquad\;\; \text{(the set of points of $\mu$
claiming site $z$ in the first $n$ stages),}
\end{align*}
via the following recursion in $n$.
Start with setting $R_0(x):=\emptyset$. 

\begin{enumerate}
\item This first step is a recursion within the recursion. 
Fix $x\in\mu$ and mostly suppress it in the notation until 
step (1) is over. 
Define 
\begin{align*}
S_{n,1}(s) &:= B(x,s) = 
 \text{
the
ball with center $x$ and radius $s>0$,} \\
\beta_{n,1} &:= \alpha\mu\{x\}, \\
s_{n,1} 
&:= \inf\{s\geq 0 : \nu\big(S_{n,1}(s) \setminus R_{n-1}\big) \geq \beta_{n,1}\}, \\
\Delta_{n,1} &:= S_{n,1}(s_{n,1})\setminus S_{n,1}(s_{n,1}-)
\qquad \text{note that } \Delta_{n,1} := \partial B(x, s_{n,1}).
\end{align*}
For $k=1,\dots,d-1$, proceed recursively as follows. Let 
$y_{n,k} \in \R^d$ be the lexicographically lowest element of 
$
\Delta_{n,k}
$
and set
\begin{align*}
S_{n{,k+1}}(s) &:= B(y_{n,k},s) \cap \Delta_{n,k} \\
\beta_{n,k+1} &:= \beta_{n,k} - \nu\big(S_{n,k}(s_{n,k}-)\setminus R_{n-1}\big)\\
s_{n,k+1} 
&:= \inf\{s\geq 0 : \nu\big(S_{n,k+1}(s) \setminus R_{n-1}\big) \geq \beta_{n,k+1}\}\\
\Delta_{n,k+1} &:= S_{n,k+1}(s_{n,k+1})\setminus S_{n,k+1}(s_{n,k+1}-).
\end{align*}
Then  $\Delta_{n,d}$
contains at most two elements. 
Since $\nu$  is diffuse this implies that 
$$
\nu\big(S_{n,d}(s_{n,d}) \setminus R_{n-1}\big)=
\nu\big(S_{n,d}(s_{n,d}-) \setminus R_{n-1}\big)=\beta_{n,d}.
$$
Now set
$$
C_{n} := S_{n,1}(s_{n,1}-)\cup \dots \cup S_{n,d}(s_{n,d}-).
$$
Since $C_n$ is a disjoint union we have
$$
\nu(C_{n}\setminus R_{n-1}) = \nu(S_{n,1}(s_{n,1}-)\setminus R_{n-1})+ \dots 
+\nu(S_{n,d}(s_{n,d}-)\setminus R_{n-1}).
$$
Since
  $$
\beta_{n,k+1} := \beta_{n,k} - \nu\big(S_{n,k}(s_{n,k}-)\setminus R_{n-1}\big)
 \qquad 
\text{and} \qquad
\nu\big(S_{n,d}(s_{n,d}-) \setminus R_{n-1})\big)=\beta_{n,d}
$$
this yields
$$
\nu(C_{n}\setminus R_{n-1}) = (\beta_{n,1}-\beta_{n,2})+ \dots 
+ (\beta_{n,d-1}-\beta_{n,d})
+\beta_{n,d} = \beta_{n,1}.
$$
Thus, due to $
\beta_{n,1} = \alpha\mu\{x\}$, we obtain
$$
\nu(C_{n}\setminus R_{n-1}) =\alpha\mu\{x\}.
$$
\item Recall that $x$ was suppressed in the above step. We
 now make it explicit and write $C_n(x)$ instead of only $C_n$.  For $z\in\R^d$, define
\begin{align*}
A_{n}(z):=\{x\in\mu: z\in C_{n}(x)\}.
\end{align*}
If $A_{n}(z)\ne\emptyset$ then define 
$$
\tau_{n}(z):=l(\{x\in A_n(z):\|z-x\|=d(z,A_{n}(z))\})
$$
as the point {\em shortlisted} by site $z$ at stage $n$,
where $l(B)$ denotes the lexicographic minimum of a finite non-empty 
set $B\subset\R^d$ and where $d(z,A_{n}(z))$ is the distance of $z$ 
from the set $A_{n}(z)$. If $A_{n}(z)=\emptyset$ then define $\tau_{n}(z):=\infty$.
\item For $x\in\mu$, define
$$
R_{n}(x):=\{z\in C_{n}(x):\tau_{n}(z)\ne x\}.
$$

\end{enumerate}

\end{algorithm}

\bigskip

Now define a mapping $\tau^{\alpha}(\nu,\mu,\cdot)\colon\R^d\to\R^d\cup\{\infty\}$ as follows.
If $\tau_n(z)=\infty$ 
for all $n\in\N_0$ put
$\tau^{\alpha}(\nu,\mu,z):=\infty$. 
Otherwise, set $\tau^{\alpha}(\nu,\mu,z):=\lim_{n\to\infty}\tau_n(z)$.
We argue as follows that this limit exists. 
Defining $C_0(x):=\{x\}$ for all $x\in\mu$, we assert that for
all $n \in \N$ the following holds:
\begin{align}\label{1}
C_n(x)&\supset C_{n-1}(x),\quad   x\in\mu,\\  
\label{2}
A_n(z) &\supset A_{n-1}(z),\quad z \in \R^d,\\
\label{3}
R_n(x) &\supset R_{n-1}(x),\quad x\in\mu.
\end{align}
This is proved by induction; clearly \eqref{1} implies
\eqref{2} and \eqref{2} implies \eqref{3}, while \eqref{3} implies that 
\eqref{1} holds for the next value of $n$.
By \eqref{2},  $\|\tau_n(z)-z\|$ is decreasing in $n$,  and hence,
since $\mu$ is locally finite,
there exist $x\in\mu$ and $n_0\in\mathbb{N}$ such that
$\tau_n(z)=x$ for all $n\ge n_0$. In this case we define
$\tau^{\alpha}(\nu,\mu,z):=x$. 
If $\nu(\R^d)<\infty$ or
$\mu(\R^d)=0$ we set $\tau^{\alpha}(\nu,\mu,z):=\infty$.
We shall now prove that $\tau^{\alpha}$ (applied with $\xi$ and 
$\eta$ instead of $\nu$ and $\mu$)
has the following property defined in Section 3.

\begin{lemma}\label{l11.6} 
Assume that $\xi$ and $\eta$
are jointly stationary and ergodic random measures on $\R^d$ such that
$\xi$ is diffuse, $\eta$ is discrete with locally finite support 
and $\lambda_\xi\lambda_\eta>0$. Let $\alpha>0$. Then $\tau$ defined on $\Omega\times\R^d$
by $\tau(\omega,x):=\tau^\alpha(\xi(\omega),\eta(\omega),x)$
is an allocation with appetite $\alpha$.
\end{lemma}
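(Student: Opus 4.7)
The plan is to verify the three defining properties of an allocation with appetite $\alpha$: measurability and equivariance, the upper bound \eqref{e11.45}, and the stability property \eqref{e11.46}; property \eqref{e3.59} is immediate since $\tau_n(z)\in\eta^*\cup\{\infty\}$ is preserved under the eventual-limit definition of $\tau(z)$. Measurability of $\tau^\alpha$ follows from the explicit recursive definition in Algorithm~\ref{a11.5}: each of $s_{n,k}$, $y_{n,k}$, $C_n(x)$, $\tau_n(z)$ and $R_n(x)$ is a measurable functional of $(\xi,\eta)$, and the stabilisation of $\tau_n(z)$ argued inside the algorithm produces the pointwise limit. Equivariance holds because every operation used -- ball inflation, mass-threshold hitting, lexicographic selection, intersection and union -- is translation-covariant; a direct bookkeeping then gives $\tau^\alpha(\theta_y\xi,\theta_y\eta, x-y)=\tau^\alpha(\xi,\eta,x)-y$.

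For the upper bound \eqref{e11.45}, the crucial identity is
$$
\xi\bigl(C_n(x)\setminus R_{n-1}(x)\bigr)=\alpha\eta\{x\},\qquad x\in\eta^*,\ n\in\mathbb{N},
$$
which is precisely the output of step~(1) of Algorithm~\ref{a11.5}; here the diffuseness of $\xi$ is essential, since it kills the $\xi$-mass of the lower-dimensional residual spheres and caps at the end of the sphere-slicing recursion. Since $R_{n-1}(x)\subset C_n(x)$, this rewrites as $\xi(C_n(x))=\xi(R_{n-1}(x))+\alpha\eta\{x\}$. Writing $C_\infty(x):=\bigcup_n C_n(x)$ and $R_\infty(x):=\bigcup_n R_n(x)$, the monotonicity relations \eqref{1}--\eqref{3} together with the elementary observation that $z\in R_n(x)$ forces $\tau_m(z)\ne x$ for all $m\ge n$ yield the identification $C^\tau(x)=C_\infty(x)\setminus R_\infty(x)$. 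Since $C^\tau(x)\cap C_n(x)\subset C_n(x)\setminus R_{n-1}(x)$, the displayed identity gives $\xi(C^\tau(x)\cap C_n(x))\le\alpha\eta\{x\}$, and letting $n\to\infty$ proves \eqref{e11.45}.

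The main obstacle is the stability property \eqref{e11.46}. Taking $n\to\infty$ in $\xi(C_n(x))=\xi(R_{n-1}(x))+\alpha\eta\{x\}$, one sees that whenever $\xi(C_\infty(x))<\infty$ one has the \emph{exact} equality $\xi(C^\tau(x))=\alpha\eta\{x\}$. Therefore if some $x\in\eta^*$ is \emph{unsatisfied}, meaning $\xi(C^\tau(x))<\alpha\eta\{x\}$, then necessarily $\xi(C_\infty(x))=\infty$. Because $C_n(x)\subset B(x,s_{n,1})$ and $\xi$ is locally finite, this can occur only if the primary radius $s_{n,1}\uparrow\infty$. But $C_n(x)\supset S_{n,1}(s_{n,1}-)$, the open ball of radius $s_{n,1}$ about $x$, so $s_{n,1}\to\infty$ forces $C_\infty(x)=\R^d$. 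Every $z\in\R^d$ then belongs to $C_n(x)$ for some $n$, so $A_n(z)\ne\emptyset$ for all large $n$, and the stabilisation of $\tau_n(z)$ gives $\tau(z)\ne\infty$. This precludes $\xi(\{z:\tau(z)=\infty\})>0$ from coexisting with an unsatisfied atom, which is \eqref{e11.46}. The delicate part here is tying the exact mass-balance identity to the geometric growth of the primary balls to conclude $C_\infty(x)=\R^d$ whenever an atom remains unsatisfied.
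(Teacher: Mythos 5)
Your proof is correct, and while it runs through the same overall architecture as the paper's (the same Algorithm~\ref{a11.5}, the monotonicity relations \eqref{1}--\eqref{3}, the limit identity for $\I\{\tau(z)=x\}$, and the final dichotomy ``unsatisfied atom $\Rightarrow$ $C_\infty(x)=\R^d$ $\Rightarrow$ no site is sent to $\infty$''), your route to the stability property \eqref{e11.46} is genuinely different at its central step. The paper argues by contradiction: assuming $\xi(C^\tau(x))<\alpha\eta\{x\}$ it extracts an $\alpha_1<\alpha$ with $\xi(C_n(x)\setminus R_{n-1}(x))\le\alpha_1\eta\{x\}$ eventually, assumes $C_\infty(x)\ne\R^d$ so that the radii $r_n(x)$ stay bounded and $C_\infty(x)$ is bounded, and then shows the algorithm would be forced to claim beyond $C_\infty(x)$, a contradiction. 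You instead exploit the \emph{exact} output of step~(1), $\xi(C_n(x)\setminus R_{n-1}(x))=\alpha\eta\{x\}$, together with the identification $C^\tau(x)=C_\infty(x)\setminus R_\infty(x)$ (which you justify correctly via the once-rejected-always-rejected observation), and pass to the limit in $\xi(C_n(x))=\xi(R_{n-1}(x))+\alpha\eta\{x\}$ to conclude that an unsatisfied atom forces $\xi(C_\infty(x))=\infty$, hence unbounded primary radii, hence $C_\infty(x)=\R^d$. This is cleaner and arguably more robust: it replaces the paper's slack-$\alpha_1$ extraction (which sits somewhat awkwardly next to the exact mass identity and relies on an unstated convergence of $\xi(C_n\setminus R_{n-1})$ to $\xi(C^\tau(x))$) by a monotone-convergence argument on genuinely increasing sets; it also hands you, as a byproduct, the exact $\alpha$-balance $\xi(C^\tau(x))=\alpha\eta\{x\}$ in the finite case, which the paper only recovers later via Palm calculus in Proposition~\ref{p1}. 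Your treatment of \eqref{e11.45} by monotone convergence along $C^\tau(x)\cap C_n(x)$ is equivalent to the paper's Fatou argument.

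One small point you should make explicit: the exact identity $\xi(C_n(x)\setminus R_{n-1}(x))=\alpha\eta\{x\}$ requires every inflation in step~(1) to terminate, i.e.\ $s_{n,k}<\infty$, which holds because $\xi(\R^d)=\infty$ while $R_{n-1}(x)$ is contained in the bounded set $C_{n-1}(x)$ and so has finite $\xi$-mass. The fact that $\BP(\xi(\R^d)=\eta(\R^d)=\infty)=1$ is not automatic from local finiteness; it follows from ergodicity together with $\lambda_\xi\lambda_\eta>0$, and the paper records this reduction explicitly before fixing deterministic $\nu$ and $\mu$. Since your whole limit argument rests on the exact identity holding at every stage, this a.s.\ reduction deserves a sentence; with it added, your proof is complete.
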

\begin{proof} It follows by induction over the
stages of Algorithm \ref{a11.5} that the mappings $\tau_n$ are
measurable as functions of $\nu$, $\mu$ and $z$,
where measurability in $\nu$ and $\mu$ refers to the standard
$\sigma$-field on the space of locally finite measures; see e.g.\ \cite{LastPenrose17}. 
(The proof of this fact is left to the reader.) 
Hence $\tau^{\alpha}$ is measurable. Moreover it is clear that
$\tau^{\alpha}$ and hence also $\tau$ has the required covariance property.
Next we note that $\BP(\xi(\R^d)=\eta(\R^d)=\infty)=1$,
a consequence of ergodicity and $\lambda_\xi\lambda_\eta>0$.

In the remainder of the proof  we fix two locally finite 
measures $\nu$ and $\mu\ne 0$. We assume that $\nu$ is diffuse
and satisfies $\nu(\R^d)=\infty$, while $\mu$ is  assumed
to be discrete with purely isolated atoms.
Upon defining $\tau^{\alpha}(\nu,\mu,\cdot)$  we noted that for each $z\in\R^d$, 
either $\tau^{\alpha}(\nu,\mu,z)=\infty$ or $\tau_n(z)=x$
for some $x\in\mu$ and all sufficiently large $n\in\N$. Therefore
\begin{align}\label{e11.876}
\I\{\tau^{\alpha}(\nu,\mu,z)=x\}
=\lim_{n\to\infty}\I\{z\in C_n(x)\setminus R_{n-1}(x)\},\quad z\in\R^d.
\end{align}
On the other hand, by Algorithm \ref{a11.5}(1) we have
$\nu(C_n(x)\setminus R_{n-1}(x))\le\alpha\mu\{x\}$, so that \eqref{e11.45} follows
from Fatou's lemma.

As in Section 3 we set
\begin{align*}
C^{\tau^\alpha}(x):=\{z\in\R^d:\tau^\alpha(\nu,\mu,z)=x\}.
\end{align*}
We now show that $\{z\in\R^d:\tau^\alpha(\nu,\mu,z)=\infty\}\ne\emptyset$ and
  $\{x\in\mu:\nu(C^{\tau^\alpha}(x))<\alpha\mu\{x\}\}\ne\emptyset$
cannot hold simultaneously, 
implying  the event at \eqref{e11.46} to have probability zero.
For that purpose we assume the strict inequality
$\nu(C^{\tau^\alpha}(x))<\alpha\mu\{x\}$ for some $x\in\mu$. 
By \eqref{e11.876} this implies that there exist $n_0\in\N$ and $\alpha_1<\alpha$ such that
$\nu(C_n(x)\setminus R_{n-1}(x))\le \alpha_1\mu\{x\}$ for 
$n\ge n_0$.
Let $C_\infty(x):=\cup^\infty_{n=1}C_n(x)$. We assert that
$C_\infty(x)=\R^d$. Assume on the contrary this is not the case.
By construction, there exist $r_n(x)>0$, $n\in\N$, such that $B^0(x,r_n(x))\subset C_n(x)\subset B(x,r_n(x))$,
where $B^0(x,r_n(x))$ is the interior of $B(x,r_n(x))$.
Since $C_\infty(x)\ne\R^d$ and the sets $C_n$ are increasing,
we have $r_\infty(x):=\lim_{n\to\infty}r_n(x)<\infty$.
Then $C_\infty(x)\subset B(x,r_\infty(x))$ is bounded and there exists $n\ge n_0$ such
that $\nu(C_\infty(x)\setminus C_n(x))\le \mu\{x\}(\alpha-\alpha_1)/2$.
Hence we obtain (since $R_{n-1}(x)\subset C_n(x)$)
$$
\nu(C_\infty(x)\setminus R_{n-1}(x))
= \nu(C_\infty(x)\setminus C_n(x))+\nu(C_n(x)\setminus R_{n-1}(x))\le \alpha_2\mu\{x\},
$$
where $\alpha_2:=(\alpha+\alpha_1)/2<\alpha$.
%there exist $\alpha_2\in (\alpha_1,\alpha)$ and $n\ge n_0$ such that 
%\begin{align*}
%\nu(C_\infty(x)\setminus R_{n-1}(x))\le \alpha_2\mu\{x\}.
%\end{align*}
By definition of the algorithm this implies that  $C_\infty(x)$ is a strict subset of $C_n(x)$. This contradiction shows  
that $C_\infty(x)=\R^d$.
Now taking $z\in\R^d$, we hence have $z\in C_n(x)$ 
for some $n\ge 1$, so that $z$ shortlists either $x$ or some
closer point of $\mu$. In either case, 
$\tau^\alpha(\nu,\mu,z)\ne\infty$.
\end{proof}

\begin{proposition}\label{plfdisdest}
Assume that $\xi$ and $\eta$
are jointly stationary and ergodic random measures on $\R^d$ such that
$\xi$ is diffuse, $\eta$ is discrete with locally finite support (isolated atoms)
and $0<\lambda_\xi=\lambda_\eta<\infty$. 
Then $\tau$ defined on $\Omega\times\R^d$
by $\tau(\omega,x):=\tau^1(\xi(\omega),\eta(\omega),x)$
is an allocation balancing $\xi$ and $\eta$.
\end{proposition}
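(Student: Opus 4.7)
The proof should be an essentially immediate combination of Lemma~\ref{l11.6} and Proposition~\ref{p1}, specialised to $\alpha=1$. I will spell out the two steps needed to bridge the abstract conclusions (``appetite'' and ``$\alpha$-balanced'') with the two concrete balancing conditions \eqref{bal456} and \eqref{bal123}.

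First I would invoke Lemma~\ref{l11.6} with $\alpha=1$: this gives that the map $\tau(\omega,x):=\tau^1(\xi(\omega),\eta(\omega),x)$ is an allocation in the sense of Section~\ref{secprelim} (measurable and equivariant), and has appetite $1$ with respect to $(\xi,\eta)$. In particular, property~\eqref{e3.59} holds, so that $\xi$-almost every $z$ is sent into $\eta^{*}\cup\{\infty\}$. Next, since $\lambda_\xi=\lambda_\eta$, the value $\alpha=1$ lies in $(0,\lambda_\xi\lambda_\eta^{-1}]$, so Proposition~\ref{p1} applies. It yields the two key identities: (i) $\BP$-a.s.\ $\eta(\{x\in\R^d:\xi(C^{\tau}(x))\ne \eta\{x\}\})=0$, which since $\eta$ is carried by its atoms is equivalent to $\xi(C^{\tau}(x))=\eta\{x\}$ for every $x\in\eta^{*}$; and (ii) $\lambda_\xi\BP_\xi(\tau(0)\ne\infty)=\lambda_\eta$, which together with $\lambda_\xi=\lambda_\eta$ forces $\BP_\xi(\tau(0)=\infty)=0$.

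Now I would translate (ii) into condition~\eqref{bal456}. By definition \eqref{Palmmeasure} of $\BP_\xi$, the equality $\BP_\xi(\tau(0)=\infty)=0$ is equivalent to
\[
\BE\int \I\{\tau(z)=\infty\}\,\xi(dz)=0,
\]
so that $\xi(\{z:\tau(z)=\infty\})=0$ almost surely; this is exactly \eqref{bal456}. For \eqref{bal123}, fix $C\in\mathcal{B}(\R^d)$. Combining \eqref{e3.59} with \eqref{bal456} we have $\xi$-almost surely $\tau(z)\in\eta^{*}$, hence
\[
\int \I\{\tau(z)\in C\}\,\xi(dz)=\sum_{x\in\eta^{*}}\I\{x\in C\}\,\xi(C^{\tau}(x))
=\sum_{x\in\eta^{*}}\I\{x\in C\}\,\eta\{x\}=\eta(C),
\]
where the second equality uses (i) and the third uses that $\eta$ is discrete with support $\eta^{*}$. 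This verifies \eqref{bal123}, completing the proof.

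There is no real obstacle here; the substantive work was carried out in Lemma~\ref{l11.6} (convergence and appetite of the Gale--Shapley-type algorithm) and in Proposition~\ref{p1} (the mass-balance argument using Neveu's exchange formula and the intensity assumption). The only thing to watch is that the identity $\xi(C^\tau(x))=\eta\{x\}$ coming from Proposition~\ref{p1} a priori holds only for $\eta^{*}$-almost every $x$; but since $\eta^{*}$ is simple and counts each atom of $\eta$, this is equivalent to the pointwise statement over the (a.s.\ countable) atoms of $\eta$, which is what is needed to identify the pushforward measure with $\eta$ on an arbitrary Borel set.
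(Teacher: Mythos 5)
Your proof is correct and takes essentially the same route as the paper's: invoke Lemma~\ref{l11.6} with $\alpha=1$ to get an allocation of appetite $1$, then apply Proposition~\ref{p1} (using $\lambda_\xi=\lambda_\eta$) to obtain the exact-balance identity and $\BP_\xi(\tau(0)\ne\infty)=1$. The only difference is that the paper compresses the final step into ``these two facts imply the desired balancing property,'' whereas you correctly spell out the translation into \eqref{bal456} via the Palm definition \eqref{Palmmeasure} and into \eqref{bal123} by summing $\xi(C^{\tau}(x))=\eta\{x\}$ over the atoms of $\eta$ in $C$.
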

{\em Proof:} By Lemma \ref{l11.6} 
$\tau$ is an allocation of appetite 1. Since $\lambda_\xi=\lambda_\eta$
we can apply Proposition \ref{p1} to see that
$\eta(\{x\in\R^d:\xi(C^{\tau}(x))\ne \eta\{x\}\})=0$ holds almost surely
and moreover that $\BP_\xi(\tau(0)\ne\infty)=1$.
These two facts imply the desired balancing property of $\tau$.\qed

\bigskip

The allocation in Proposition \ref{plfdisdest} is stable. 
In Section \ref{secdiscrete} we shall consider a general discrete $\eta$.
But the balancing allocation will not be stable anymore.

\begin{remark}\label{4} \rm Proposition \ref{plfdisdest} can already be
found as Proposition 4.37 in \cite{OmidAli16}. There the authors used
a site-optimal version of a stable allocation while ours is point-optimal. 
\end{remark}

\section{Destination a discrete random measure}\label{secdiscrete}

In this section we deal with a random measure $\eta$ which
is discrete but not necessarily with a locally finite support (isolated atoms).
We need to introduce some notation.
If $A\subset\Omega\times \R^d$ is measurable 
then we identify $A$ with
the mapping $\omega\mapsto A(\omega):=\{x\in\R^d:(\omega,x)\in A\}$.
If $\xi$ is a random measure on $\R^d$, then we define for
each $\omega\in\Omega $ the restriction of $\xi(\omega)$
to $A(\omega)$ by
$\xi_A(\omega):=\int\I\{x\in \cdot, (\omega,x)\in A\}\xi(\omega,dx)$. Clearly 
$\xi_A$ is again a random measure.

\begin{proposition}\label{pdiscretedest}
Assume that $\xi$ and $\eta$
are jointly stationary and ergodic random measures on $\R^d$ such that
$\xi$ is diffuse, $\eta$ is discrete and $\infty > \lambda_\xi\ge \lambda_\eta>0$.
Then there exists a measurable $A\subset\Omega\times\R^d$ and
an allocation $\tau$ balancing $\xi_A$ and $\eta$. If
$\lambda_\xi= \lambda_\eta$, then $\tau$ balances $\xi$ and $\eta$.
\end{proposition}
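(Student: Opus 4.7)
The plan is to reduce to the isolated-atoms case of Proposition \ref{plfdisdest} by partitioning $\eta$ into countably many pieces with locally finite support and sequentially allocating disjoint sub-measures of $\xi$ to them. Choose a sequence $\infty = b_0 > b_1 > b_2 > \cdots$ with $b_n \downarrow 0$ (e.g.\ $b_n = 2^{-n+1}$) and set
$$
\eta_n := \sum_x \I\{\eta\{x\} \in (b_n,b_{n-1}]\}\, \eta\{x\}\,\delta_x,\qquad n\ge 1.
$$
Each $\eta_n$ is an equivariant measurable factor of $\eta$, and hence jointly stationary and ergodic with $\xi$. Since every atom of $\eta_n$ has mass exceeding $b_n$ while $\lambda_{\eta_n}\le \lambda_\eta<\infty$, the number of atoms of $\eta_n$ in a bounded Borel set $B$ is at most $\eta_n(B)/b_n<\infty$ a.s., so $\eta_n$ has locally finite support (isolated atoms). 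Moreover $\eta=\sum_n\eta_n$ and $\sum_n\lambda_{\eta_n}=\lambda_\eta\le\lambda_\xi$.

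Next, inductively build disjoint measurable equivariant sets $A_n\subset\Omega\times\R^d$ and allocations $\tau_n$. Set $A^0:=\Omega\times\R^d$. Given $A^{n-1}$, the restriction $\xi_{A^{n-1}}$ is a stationary diffuse factor of $(\xi,\eta)$ jointly ergodic with $\eta_n$ and of intensity $\lambda_\xi-\sum_{k<n}\lambda_{\eta_k}\ge\lambda_{\eta_n}$ (by the telescoping estimate $\sum_k\lambda_{\eta_k}=\lambda_\eta\le\lambda_\xi$). Apply Lemma \ref{l11.6} with $(\xi_{A^{n-1}},\eta_n)$ in place of $(\xi,\eta)$ and $\alpha=1$ to obtain an allocation $\tau_n(\omega,x):=\tau^1(\xi_{A^{n-1}}(\omega),\eta_n(\omega),x)$ of appetite $1$. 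Because $1\le \lambda_{\xi_{A^{n-1}}}/\lambda_{\eta_n}$, Proposition \ref{p1} implies that $\tau_n$ is $1$-balanced: $\xi_{A^{n-1}}(C^{\tau_n}(x))=\eta_n\{x\}$ at every atom $x$ of $\eta_n$, and the Palm identity $\lambda_{\xi_{A^{n-1}}}\BP_{\xi_{A^{n-1}}}(\tau_n(0)\ne\infty)=\lambda_{\eta_n}$ shows that exactly $\lambda_{\eta_n}$ of the intensity of $\xi_{A^{n-1}}$ is allocated to $\eta_n$. Set
$$
A_n:=\{(\omega,x)\in A^{n-1}:\tau_n(\omega,x)\ne\infty\},\qquad A^n:=A^{n-1}\setminus A_n.
$$
Equivariance of $\tau_n$ makes $A_n$ and $A^n$ equivariant; by construction $\tau_n$ balances $\xi_{A_n}$ and $\eta_n$, and $\lambda_{\xi_{A^n}}=\lambda_\xi-\sum_{k\le n}\lambda_{\eta_k}$.

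Finally, put $A:=\bigcup_n A_n$ and define $\tau(\omega,x):=\tau_n(\omega,x)$ if $x\in A_n(\omega)$ and $\tau(\omega,x):=\infty$ otherwise. Measurability and equivariance are inherited from the $\tau_n$'s, and the image of $\xi_A$ under $\tau$ is $\sum_n\eta_n=\eta$, so $\tau$ balances $\xi_A$ and $\eta$. When $\lambda_\xi=\lambda_\eta$, the residual $\xi_{A^c}$, which is the decreasing limit of $\xi_{A^n}$, is a stationary factor of $(\xi,\eta)$ of intensity $\lambda_\xi-\lambda_\eta=0$ and therefore vanishes $\BP$-a.s., so $\xi_A=\xi$ and $\tau$ balances $\xi$ and $\eta$. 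The delicate point is the bookkeeping at each induction stage: one must check that $\xi_{A^{n-1}}$ retains diffuseness, joint stationarity and ergodicity with $\eta_n$, and the intensity inequality $\lambda_{\xi_{A^{n-1}}}\ge\lambda_{\eta_n}$ that underwrites the application of Proposition \ref{p1} with $\alpha=1$; all three are preserved because $A^{n-1}$ is an equivariant factor of $(\xi,\eta)$ and the intensities telescope against $\sum_k\lambda_{\eta_k}\le\lambda_\xi$.
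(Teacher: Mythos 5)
Your proof is correct and follows essentially the same route as the paper's: decompose $\eta$ into mass bands (your dyadic bands $(2^{-n+1},2^{-n+2}]$ versus the paper's harmonic bands $[1/n,1/(n-1))$, an immaterial difference), each with isolated atoms, then recursively peel off the allocated part of $\xi$ using the appetite-$1$ stable allocation, invoking Proposition~\ref{p1} for the balancing and intensity bookkeeping, and finally kill the residual by the zero-intensity argument when $\lambda_\xi=\lambda_\eta$. Your variant of making the sets $A_n$ disjoint by construction ($A_n\subset A^{n-1}$, $A^n:=A^{n-1}\setminus A_n$) is a slight tidying of the paper's remark that disjointness may be assumed without loss of generality, but the argument is otherwise identical.
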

{\em Proof:} Write $\eta = \sum_{n=1}^\infty \eta_n$,
where $\eta_1, \eta_2, \dots$ are the mutually singular discrete random measures
\begin{align*}
\eta_n = \sum_x  \I\{1/n \leq \eta\{x\}<1/(n-1)\}\eta(\{x\})\delta_x
\end{align*}
which all have non-accumulating (isolated) points.
(In the definition of $\eta_1$ we use the convention $1/0:=\infty$.)

Throughout this proof we consider the stable allocation $\tau^1$ with
appetite 1; see the definition preceding Lemma \ref{l11.6}.
Starting with $\xi^1:=\xi$ we define sequences of measurable sets $A_n\subset\Omega\times\R^d$,
$n\in\N$, and of stationary random measures $(\xi_n)_{n\ge 1}$ and $(\xi^n)_{n\ge 1}$
recursively, by setting for each $n\in\N$,
\begin{align*}
A_{n}&:=\{x\in\R^d:\tau^1(\xi^{n},\eta_{n},x)\ne\infty\},\\
\xi_{n}&:=\xi^{n}_{A_n},\\
\xi^{n+1}&:=\xi^{n}_{\R^d\setminus A_{n}}.
\end{align*}
Set $B_n:=A_1\cup\cdots \cup A_n$.
Using Proposition \ref{p1}
one can prove by induction that $\xi(A_{n+1}\cap B_n)=0$,
and $\lambda_{\xi_{n}}= \lambda_{\eta_{n}}$.
Note that $\sum^\infty_{n=1}\xi_n=\xi_A$, where $A:=\cup^\infty_{n=1}A_n$.

The calculation below shows that it is no restriction
of generality to assume that the sets $A_n$ are disjoint.
Therefore we can define an allocation $\tau$ by 
\begin{align}
\tau(x):=
\begin{cases}
\tau^1(\xi^n,\eta_n,x),&\text{if $x\in A_n$ for some $n\in\N$},\\
\infty, &\text{otherwise}.
\end{cases}
\end{align}
Then we obtain for each Borel set $C\subset\R^d$ that
\begin{align*}
\int \I\{\tau(x)\in C\}\xi_A(dx)
&=\sum^\infty_{n=1}\int \I\{\tau(x)\in C\}\xi_n(dx)\\
&=\sum^\infty_{n=1}\int \I\{\tau^1(\xi^n,\eta_n,x)\in C\}\xi_n(dx)\\
&=\sum^\infty_{n=1}\eta_n(C)=\eta(C).
\end{align*}
Therefore $\tau$ is balancing $\xi_A$ and $\eta$. 

Assume now that $\lambda_\xi=\lambda_\eta$.
Then we obtain for each Borel set $C\subset\R^d$ that
\begin{align*}
\BE \xi_A(C)=\sum^\infty_{n=1}\BE\xi_n(C)
=\sum^\infty_{n=1}\lambda_{\xi_n}\lambda_d(C)
=\sum^\infty_{n=1}\lambda_{\eta_n}\lambda_d(C)
=\lambda_\eta\lambda_d(C)=\lambda_\xi\lambda_d(C).
\end{align*} 
Therefore, $\BE \xi_A(C)=\BE \xi(C)$. Since $\xi_A\le\xi$,
this implies that $\xi_A=\xi$ $\BP$-a.s.
\qed

\section{Destination a Diffuse Random Measure}~\label{secdifdest} 

In this section we deal with a diffuse destination $\eta$ 
in the case when
there exists an auxiliary simple point process $\chi$. 
The key idea is to use the allocation from the isolated-atoms case 
(Proposition~\ref{plfdisdest}) 
to map both $\xi$ and $\eta$  to $\chi$ creating pairs of
 $\xi$-cells and $\eta$-cells of mass one 
 associated with each point of  $\chi$, 
and then to transport
the $\xi$-mass of the $\xi$-cells into the $\eta$-mass of the  $\eta$-cells 
by passing through Lebesgue measure on $[0,1]$.

\begin{proposition}\label{pdiffusedest}
Assume that $\xi$ and $\eta$ are 
jointly stationary and ergodic random measures on $\R^d$ such that
$0<\lambda_\xi=\lambda_\eta <\infty$. 
Assume further that $\xi$ and $\eta$ are both diffuse
and that there exists an auxiliary simple point process $\chi$ 
with finite intensity $\lambda_\chi$.
Then there exists an allocation $\tau$ balancing $\xi$ and $\eta$.
\end{proposition}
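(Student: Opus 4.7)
The plan is to use the auxiliary point process $\chi$ as an intermediate: first allocate both $\xi$ and $\eta$ separately to $\chi$ by means of the isolated-atoms construction of Section~\ref{secatomsdest}, producing, at each point of $\chi$, a matched pair of cells of equal mass; then transport within each matched pair by passing through Lebesgue measure on $[0,1]$.

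Set $\alpha := \lambda_\xi/\lambda_\chi = \lambda_\eta/\lambda_\chi$. Applying Lemma~\ref{l11.6} to the pair $(\xi,\chi)$ and again to $(\eta,\chi)$ produces two allocations of appetite $\alpha$,
\[
\sigma_\xi(\omega,z) := \tau^\alpha(\xi(\omega),\chi(\omega),z), \qquad \sigma_\eta(\omega,z) := \tau^\alpha(\eta(\omega),\chi(\omega),z).
\]
Because $\alpha = \lambda_\xi\lambda_\chi^{-1}$ and $\alpha = \lambda_\eta\lambda_\chi^{-1}$, Proposition~\ref{p1} applies in both cases. It follows that $\xi$ is a.s.\ concentrated on the cells $C_\xi(x):= \sigma_\xi^{-1}(\{x\})$ for $x\in\chi$, each of which has $\xi$-mass exactly $\alpha\chi\{x\}=\alpha$; the analogous statement holds for $\eta$ and the cells $C_\eta(x):=\sigma_\eta^{-1}(\{x\})$.

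For each $x\in\chi$ the normalized restrictions $\mu_x:=\xi|_{C_\xi(x)}/\alpha$ and $\nu_x:=\eta|_{C_\eta(x)}/\alpha$ are diffuse Borel probability measures on $\R^d$, and I wish to transport the first to the second in a covariant manner. To do this I would fix once and for all a canonical procedure $\Phi$ which assigns to every diffuse Borel probability measure $\mu$ on $\R^d$ a measurable map $\Phi[\mu]\colon\R^d\to [0,1]$ which is $\mu$-a.s.\ injective, has pushforward $(\Phi[\mu])_*\mu$ equal to Lebesgue measure on $[0,1]$, and satisfies the translation equivariance $\Phi[\tau_y\mu](z+y)=\Phi[\mu](z)$, where $(\tau_y\mu)(A):=\mu(A-y)$. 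A Knothe--Rosenblatt-type construction based on the (translation invariant) lexicographic total order on $\R^d$, followed by a canonical Borel isomorphism $[0,1]^d\to[0,1]$, yields such a $\Phi$. One then sets $T_x := \Phi[\nu_x]^{-1}\circ\Phi[\mu_x]$, so that $(T_x)_*\mu_x=\nu_x$, and defines
\[
\tau(\omega,z) := \begin{cases} T_{\sigma_\xi(\omega,z)}(z), & \sigma_\xi(\omega,z)\ne\infty,\\ \infty, & \sigma_\xi(\omega,z)=\infty. \end{cases}
\]

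Measurability and equivariance of $\tau$ will follow from those of $\sigma_\xi$, the covariance of the cells $C_\xi(\cdot)$ and $C_\eta(\cdot)$ under translation, and the equivariance of $\Phi$. The balancing identity $\xi(\tau\in\cdot)=\eta$ is obtained by decomposing $\int\I\{\tau(z)\in C\}\,\xi(dz)$ over the cells $C_\xi(x)$ and using $(T_x)_*(\xi|_{C_\xi(x)})=\eta|_{C_\eta(x)}$, then summing over $x\in\chi$ with the fact that $\eta$ is concentrated on $\bigcup_x C_\eta(x)$. The main obstacle is the canonical equivariant construction of $\Phi$: the one-step lex-quantile $z\mapsto \mu(\{w\prec_{\mathrm{lex}}z\})$ is automatically translation equivariant and pushes $\mu$ to Lebesgue on $[0,1]$, but it need not be $\mu$-a.s.\ injective (for example, if $\mu$ is uniform on a circle in $\R^2$, two antipodal points share the same lex-quantile value). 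Injectivity is restored by iterating the lex-CDF construction coordinate by coordinate via regular conditional distributions; carefully ensuring this produces a Borel map with all the required properties, even when some conditionals fail to be diffuse, is the delicate step and relies on the standard Borel isomorphism theorem.
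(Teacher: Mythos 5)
Your overall skeleton coincides with the paper's: send both $\xi$ and $\eta$ to the auxiliary $\chi$ by the stable isolated-atoms allocation, obtaining at each point of $\chi$ a matched pair of cells of equal mass, then transport cell-to-cell through Lebesgue measure on $[0,1]$. The first half of your argument is correct --- using Lemma~\ref{l11.6} with appetite $\alpha=\lambda_\xi/\lambda_\chi$ and the boundary case of Proposition~\ref{p1} is an equivalent variant of the paper's rescaling to $\lambda_\xi=\lambda_\eta=\lambda_\chi$ and appetite $1$ (Proposition~\ref{plfdisdest}). The genuine gap is in the step you yourself flag as delicate, and it is not a removable technicality: the Knothe--Rosenblatt/iterated lex-CDF construction does not in general push a diffuse measure on $\R^d$ to Lebesgue measure. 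Diffuseness of a measure on $\R^d$ does not make its conditional distributions diffuse: if a cell measure $\mu_x$ is, say, uniform on a horizontal segment --- exactly the kind of measure arising when $\xi$ charges lines, as in the counterexample of Section~\ref{seccounter} --- then the conditional law of the second coordinate given the first is a Dirac mass, the second-stage CDF transform is not uniform, and the composite map pushes $\mu_x$ to a measure concentrated on a lower-dimensional subset of $[0,1]^d$, not to Lebesgue. Composing afterwards with a Borel isomorphism $[0,1]^d\to[0,1]$ cannot repair this, because the Borel isomorphism theorem is measure-blind; so the functional $\Phi$ with your stated properties is not delivered by your sketch, and ``relies on the standard Borel isomorphism theorem'' is not a proof. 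A secondary self-inflicted difficulty: you demand translation equivariance of $\Phi$ itself, which is what forces you into the lex order in the first place; this demand is unnecessary.

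The repair is precisely the paper's device, and it simultaneously removes your injectivity worries. Recenter each pair of cells at the point $S_t$ of $\chi$; equivariance of $\tau$ then comes for free from equivariance of the two stable allocations, so the uniformizing map may be completely non-equivariant. Fix once and for all a measurable bijection $\phi\colon\R^d\to\R$ with measurable inverse, and let $F_t$ and $G_t$ be the distribution functions of $\phi$ under the recentered cell measures $\theta_{S_t}\xi_t$ and $\theta_{S_t}\eta_t$. Then $F_t$ is automatically continuous ($\xi$ is diffuse and $\phi$ is injective), so $F_t(\phi)$ is uniform on $[0,1]$ under $\theta_{S_t}\xi_t$; on the target side one uses the quantile function $G_t^{-1}$, which requires no injectivity, diffuseness or regularity of $G_t$ whatsoever; and bijectivity of $\phi$ guarantees that the measure on $\R$ with distribution function $G_t$ pulls back under $\phi^{-1}$ to exactly $\theta_{S_t}\eta_t$. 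This yields the explicit allocation $\tau(t)=S_t+\phi^{-1}(G_t^{-1}(F_t(\phi(t-S_t))))$, from which joint measurability in $(\omega,t)$ is read off directly, dissolving the measurable-selection problem hidden in your ``canonical procedure $\Phi$''.
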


{\em Proof:} 
Note that an allocation balances the diffuse $\xi$ and $\eta$ 
if and only if it balances 
 $a\xi$ and $a\eta$ for any positive constant  $a$, 
 in particular for $a=\lambda_\chi/\lambda_\xi=\lambda_\chi/\lambda_\eta$.  
 So it is no restriction to 
 assume that the common intensity of $\xi$ and $\eta$ 
 is the same as that of $\chi$, that is, $\lambda_\xi=\lambda_\eta=\lambda_\chi$. 
 We can then apply 
Proposition \ref{plfdisdest}
to the pair $\xi$ and $\chi$ and to the pair $\eta$ and $\chi$.

For each point $s$ of $\chi$
let  $A_s$  and $B_s$, respectively, be the resulting
allocation cells of $\xi$ and $\eta$ that are mapped to $s$.
Fix $t\in \R^d$ and let $S_t$ be the point of $\chi$ such that $t \in A_{S_t}$. 
Let $\xi_{t}= \xi(\cdot \cap A_{S_t})$ be the restriction of 
$\xi$  to $A_{S_t}$ 
and let $\eta_{t}=\eta(\cdot \cap B_{S_t})$ be the restriction of 
$\eta$  to $B_{S_t}$. 
Since $\chi$ is a simple point process (with mass one at each of its points) both $\xi_t$ 
and $\eta_t$ 
are (random) probability measures.

Let $\phi$ be a measurable bijection from 
$\R^d$ to $\R$ 
such that $\phi^{-1}$ is also measurable.
Shift the origin $0$ to 
 $S_t$ to obtain (random) probability measures $\theta_{S_t}\xi_t$ 
and $\theta_{S_t}\eta_t$ that
are concentrated on the shifted cells
$A_{S_t}\!-\!S_t$ and $B_{S_t}\!-\!S_t$. 
Let $F_t$ and $G_t$ be the (random) distribution functions of 
$\phi$ under these probability measures, 
that is, for $x \in \R$
\begin{align*}
F_t(x) &= \theta_{S_t}\xi_t(\phi \leq x),\\
G_t(x) &=  \theta_{S_t}\eta_t(\phi \leq x).
\end{align*}
Note that $F_t$ 
is a continuous function since $\xi$  
does not have any atoms and since $\phi$ is 
a bijection. 
Thus

\vspace*{2mm}

\qquad\qquad the distribution of $F_t(\phi)$ under $\theta_{S_t}\xi_t$
is uniform on  $[0, 1]$.

\vspace*{2mm}

\noindent
With $G_t^{-1}$ the generalized inverse (quantile function) 
of $G_t$, this in turn implies that $G_t^{-1}(F_t(\phi))$
under $\theta_{S_t}\xi_t$ has the distribution function $G_t$. 
Finally, since $\phi$ is  a bijection this implies that

\vspace*{2mm}

\qquad\qquad  the distribution of $\phi^{-1}(G_t^{-1}(F_t(\phi)))$
under $\theta_{S_t}\xi_t$
is $\theta_{S_t}\eta_t$.

\vspace*{2mm}

\noindent
In other words, the mapping $x\mapsto \phi^{-1}(G_t^{-1}(F_t(\phi(x))))$ transports the 
measure $\theta_{S_t}\xi_t$ 
on $A_{S_t}\!-\!S_t$ into the measure $\theta_{S_t}\eta_t$ on $B_{S_t}\!-\!S_t$. 
Shifting back to the original origin, this means that the mapping 
$x\mapsto S_t + \phi^{-1}(G_t^{-1}(F_t(\phi(x-S_t)))$ transports the 
measure $\xi_t$ 
on $A_{S_t}$ into the measure $\eta_t$ on $B_{S_t}$. 
Thus,
 the allocation rule $\tau$ defined by 
$$
\tau(t) = S_t + \phi^{-1} (G_t^{-1}(F_t(\phi(t-S_t)))),\quad t\in\R^d,
$$
balances $\xi$ and $\eta$.   \qed

\begin{remark}\label{6}\rm
  The above construction, using a point process to split space into
  pairs of cells and then allocate mass through Lebesgue measure,
  dates back to an informal note from 2012. It was a part of a brief
  attempt of the authors to extend the Brownian motion results of
  \cite{LaMoeTho12} to higher dimensional random fields. The obvious
  question is when this `auxiliary' process does exist. We discussed
  this with several colleagues and the existence problem did become
  part of the
  PhD topic of Ali Khezeli, see Remark~\ref{8}.  In his thesis he %also
  used a result on optimal transport to balance the finite masses of
  the pairs of allocation cells, under certain restrictions on the
  diffuse source.
    
\end{remark}

\section{Destination having Discrete and Diffuse Parts}~\label{secmixeddest}

In this section we finish the proof of Theorem~\ref{main} by 
dealing with the case when the destination $\eta$ 
contains both discrete and diffuse parts.

\begin{proposition}\label{pmixeddest}
Assume that $\xi$ and $\eta$ are 
jointly stationary and ergodic random measures on $\R^d$ such that
$\lambda_\xi=\lambda_\eta<\infty$. 
Assume further that $\xi$ is diffuse
and that $\eta$ is mixed, that is, 
$$
\eta = \eta^\emph{disc} + \eta^\emph{diff}
$$
where $\eta^\emph{disc}$ and $\eta^\emph{diff}$
are nonzero random measures that are discrete and 
diffuse respectively.
Then there exists an allocation $\tau$ balancing $\xi$ and $\eta$.
\end{proposition}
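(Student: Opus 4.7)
The plan is to split the source $\xi$ into two pieces that are routed independently to $\eta^{\text{disc}}$ and to $\eta^{\text{diff}}$, invoking Proposition~\ref{pdiscretedest} and Proposition~\ref{pdiffusedest} respectively. Since $\eta^{\text{diff}}$ is diffuse, the diffuse-destination step needs an auxiliary simple point process; the key observation is that the discrete component $\eta^{\text{disc}}$ already supplies one as a factor, so no external randomisation is required.

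First, I would build an auxiliary $\chi$ exactly as in~\eqref{i3}: since $\eta^{\text{disc}}$ is nonzero with finite intensity, the argument given after Theorem~\ref{main} furnishes a constant $c > 0$ such that
$$
\chi := \sum_x \I\{\eta^{\text{disc}}(\{x\}) > c\}\, \delta_x
$$
is a nonzero simple point process with finite intensity. As a translation-covariant factor of $\eta$, the triple $(\xi, \eta, \chi)$ is jointly stationary and ergodic. Next, I would apply Proposition~\ref{pdiscretedest} to $(\xi, \eta^{\text{disc}})$: the required inequality $\lambda_\xi \ge \lambda_{\eta^{\text{disc}}}$ holds because $\lambda_\xi = \lambda_{\eta^{\text{disc}}} + \lambda_{\eta^{\text{diff}}}$. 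This produces a measurable set $A \subset \Omega \times \R^d$ and an allocation $\tau_1$ balancing $\xi_A$ and $\eta^{\text{disc}}$. Setting $\xi' := \xi_{\R^d \setminus A} = \xi - \xi_A$, the random measure $\xi'$ is again diffuse, is jointly stationary and ergodic with $\eta^{\text{diff}}$ and $\chi$, and has intensity $\lambda_{\xi'} = \lambda_\xi - \lambda_{\eta^{\text{disc}}} = \lambda_{\eta^{\text{diff}}} > 0$. Proposition~\ref{pdiffusedest}, applied with auxiliary $\chi$, then yields an allocation $\tau_2$ balancing $\xi'$ and $\eta^{\text{diff}}$.

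Finally, I would glue them by defining $\tau(x) := \tau_1(x)$ for $x \in A$ and $\tau(x) := \tau_2(x)$ otherwise. Equivariance of $\tau$ is inherited from that of $A$, $\tau_1$ and $\tau_2$, and for any Borel set $C \subset \R^d$,
$$
\int \I\{\tau(x) \in C\}\, \xi(dx) = \int \I\{\tau_1(x) \in C\}\, \xi_A(dx) + \int \I\{\tau_2(x) \in C\}\, \xi'(dx) = \eta^{\text{disc}}(C) + \eta^{\text{diff}}(C) = \eta(C),
$$
so $\tau$ balances $\xi$ and $\eta$. The main obstacle, really the only nonroutine point, is producing the auxiliary $\chi$ when the atoms of $\eta^{\text{disc}}$ may accumulate; this is handled by~\eqref{i3} and the pigeonhole argument alluded to right after it. Everything else is a direct concatenation of earlier results.
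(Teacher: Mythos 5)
Your proposal is correct and takes essentially the same route as the paper's proof: apply Proposition~\ref{pdiscretedest} to $(\xi,\eta^{\text{disc}})$ to obtain $A$ and an allocation balancing $\xi_A$ and $\eta^{\text{disc}}$, then use the auxiliary $\chi$ from \eqref{i3} (a factor of the discrete component) to apply Proposition~\ref{pdiffusedest} to $\xi_{A^c}$ and $\eta^{\text{diff}}$, and glue the two allocations along $A$. Your write-up merely spells out the intensity bookkeeping and the final balancing computation that the paper leaves as ``easy to see''.
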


{\em Proof:} 
Note that the measures $\xi$, $\eta$, $\eta^\text{disc}$, $\eta^\text{diff}$ are 
jointly stationary and ergodic since $\eta^\text{disc}$ and $\eta^\text{diff}$ 
can be obtained from $\eta$ in an translation invariant way. 
Now apply Proposition~\ref{pdiscretedest} to $\xi$ and $\eta^\text{disc}$ using
 $\lambda_{\xi} \geq \lambda_{\eta^\text{disc}}$
 to obtain 
 a measurable $A\subset\Omega\times\R^d$ and
an allocation $\tau^\text{disc}$ balancing $\xi_A$ and $\eta^\text{disc}$. 
Then, 
with $\chi$ 
the auxiliary simple point process defined at \eqref{i3}, apply Proposition~\ref{pdiffusedest} 
 to $\xi_{A^c}$ and $\eta^\text{diff}$ using
 $\lambda_{\xi_{A^c}} = \lambda_{\eta^\text{diff}}$
 to obtain 
an allocation $\tau^\text{diff}$ balancing $\xi_{A^c}$ and $\eta^\text{diff}$. 
Finally, define an allocation $\tau$ by
$$
\tau(\omega, x) =
 \begin{cases}
\tau^\text{disc}(\omega, x),&\text{if $(\omega, x) \in A$},\\
\tau^\text{diff}(\omega, x),&\text{if $(\omega, x) \in A^c$}.
\end{cases}
$$
It is easy to see that $\tau$ balances $\xi$ and $\eta$. 
\qed

\section{Counterexample}\label{seccounter}

In this section we  show  that
the  auxiliary $\chi$ cannot simply be removed 
from Theorem~\ref{main}. There are 
diffuse $\xi$ and $\eta$ such that an allocation 
transporting $\xi$ to $\eta$ does not exist 
(without external randomisation).

Here is a specific example in two dimensions, $\R^2$. 
Let $N$
be a canonical stationary Poisson process on the $y$-axis with intensity $1$; 
canonical means that $N$ is the only source of randomness.
Let $\xi$ be formed by the one-dimensional 
Lebesgue measure 
on the lines parallel to the $x$-axis going through the points of 
$N$. Let $\eta=\lambda_2 =$ the Lebesgue measure on $\R^2$. 
The measures $\xi$ and $\eta$ are diffuse, 
jointly stationary and ergodic, 
and have the same intensity~$1$. 

Suppose there exists a balancing allocation  $\tau_N$ which
maps $\R^2$ to $\R^2$ for each fixed value of $N$ in such a way 
that the image measure of $\xi$ under $\tau_N$ is $\eta$. 
The Palm version of $N$ is $N^{\circ}=N+ \delta_0$ and the 
Palm version of $\xi$ is $\xi^{\circ}=\xi+\lambda_1$, where $\lambda_1$ 
denotes  one-dimensional  Lebesgue measure 
on the $x$-axis. 
According to \eqref{shift-coupling2224},
the existence of the allocation $\tau_N$ balancing 
$\xi$ and the two-dimensional 
Lebesgue measure $\eta=\lambda_2$
would yield the following shift-coupling of $\xi^{\circ}$ and $\xi$,
$$
\theta_{\tau_N(0)}\xi^{\circ}%\!-\tau_N(0) 
 \,\overset{D}{=}\, \xi .
$$
With $T_N$  the $y$-axis coordinate of $\tau_N(0)$, 
 this implies that
$$ \theta_{T_N}N^{\circ}% \!- T_N 
 \,\overset{D}{=}\, N. $$
But, according  to  \cite{HP05}, such a $T_N$ does not exist 
when the only sorce of randomness is $N$.

\begin{remark}\label{8}\rm  When reading a preliminary version of this paper, 
Ali Khezeli %has 
pointed out to the authors the following interesting problem,
formulated in his PhD-thesis from 2016 (in Persian). Suppose that $\xi$ is a diffuse random
measure with no invariant directions. This means that there is (almost surely)
no vector $x\ne 0$ such $\xi=\theta_{tx}\xi$ for all $t\in\R$. Does $\xi$ have
a stationary point process factor? A positive answer would bring us much
closer to a complete characterisation of the existence of balancing
factor allocations (for a diffuse source). 
Note that our counterexample has an invariant direction.
\end{remark}

\section{Remarks}\label{secrem}

\begin{remark}\label{ergod}\rm
The assumption of ergodicity has been made
for simplicity and can be relaxed. The assumption $\lambda_\xi= \lambda_\eta$
has then to be replaced by
$$
\BE[\xi[0,1]\mid \mathcal{I}]=\BE[\eta[0,1]\mid \mathcal{I}],\quad \BP\text{-a.e.}
$$
We refer to \cite{LaTho09, Thor96} for more detail on this point.
\end{remark}

\begin{remark}\label{ali-omid}\rm
A natural question (asked in \cite{OmidAli16}
for instance) is whether there exists a balancing
allocation  factor
  $\tau$ 
if the source $\xi$ is Lebesgue measure or, more generally,
absolutely continuous with respect to Lebesgue measure.
We have proved the answer to be positive whenever the destination
$\eta$  is {\em not} diffuse (not purely non-atomic), and also 
when $\eta$  is diffuse if we assume
that there exists an auxiliary point process factor~$\chi$.

But this assumption is not necessary, 
a balancing
allocation  factor $\tau$ {\em can} exist {\em even} when 
$\eta$ is diffuse and 
{\em no} auxiliary point process factor $\chi$ exists.
An example of this is obtained %This an be seen 
by swapping 
%the two diffuse random measures
source and destination 
in Section~\ref{seccounter}: take $\xi=\lambda_2 =$ the 
Lebesgue measure on $\R^2$ 
and let
 $\eta$ be formed by the one-dimensional 
Lebesgue measure 
on the lines parallel to the $x$-axis going through the points of 
$N$ where $N$ is a one-dimensional Poisson process 
on the $y$-axis. 
If $N$ is the only source of randomness then, according to
Section~\ref{seccounter}, there exists {\em no}
 auxiliary $\chi$. However,  
 there exists a balancing allocation factor $\tau$: for example, 
 take
 $\tau(x, y) = (x, \tau_1(y))$ where $\tau_1$ is an 
 allocation balancing Lebesgue measure on the line (the $y$-axis)
 and the Poisson process~$N$.  
 
 Thus, the existence of an auxiliary point process 
 is not a complete characterisation of the existence of 
 a balancing allocation factor when the source is diffuse. 
\end{remark}

\begin{remark}\label{auxfactor}\rm
If the source $\xi$ is not diffuse, then the question asked
in this paper is in most cases not very meaningful.
If, for instance, the destination $\eta$ is diffuse,
then a balancing allocation cannot exist. But even otherwise such
allocations can only exist in special cases, for instance,
if both $\xi$ and $\eta$ are simple point processes.
\end{remark}

\begin{remark}\label{alternativ}\rm
The following slight modification of the construction in Section~\ref{secdifdest}
 (where the destination is diffuse)
can be used to obtain allocations in  the
 cases treated in Section~\ref{secdiscrete} and Section~\ref{secmixeddest}.\ 
 Let $\chi$ be the simple point process defined at \eqref{i3}.\ 
 In the proof of Theorem~\ref{pdiffusedest} remove the first paragraph,
 replace the allocation cells 
 $B_s$ of $\eta$ by the Voronoi cells $V_s$ of $\chi$,
 and then modify $\chi$ by letting it have mass $\eta(V_s)$ at each point $s$. 
After this modification we have
$\lambda_\xi=\lambda_\eta=\lambda_\chi$ 
and  can apply 
Proposition \ref{plfdisdest}
to the pair $\xi$ and $\chi$.
Let  $A_s$  be the 
allocation cell of $\xi$ that is mapped to $s$. 
 This yields  $F_t$ and $G_t$ that are  
 cumulative  mass 
 functions of measures 
 that need not have mass $1$ but only have the same finite mass, 
 $F_t(\infty) = \xi(A_{S_t})= \eta(V_{S_t})=G_t(\infty)$. 
Replace first the word {\em distribution function} by {\em cumulative  mass 
 function}
and then the word {\em distribution} by {\em image messure}.
The rest of the proof now goes through as it stands.
 
This method thus yields allocations in all the
three cases where $\eta$ does not consist of isolated atoms.
We have chosen here to treat each of those cases separately
because treating discrete measures as in Section~\ref{secdiscrete} is  more explicit
then this alternative method.
\end{remark}

\begin{remark}\label{history}\rm 
Here are some further historical notes. Allocations finding extra heads in 
coin tosses 
on the $d$ dimensional grid and extra points in 
the $d$ dimensional Poisson process 
were constructed in \cite{HL01, HP05, HoHolPe06, CPPeresR10}
by Liggett, Holroyd, Hoffman, Peres et.al. 
More generally, the allocations 
transporting Lebesgue measure to the points of 
stationary ergodic finite-intensity point processes 
produce the Palm versions of the processes. 
The construction in \cite{HoHolPe06}
 involved a Gale-Shapley algorithm resulting in a `stable' allocation
while the construction in \cite{CPPeresR10} 
used a gravitational force field to obtain an `economical' allocation. 
In \cite{Hues16},  
unique optimal allocations between jointly stationary random measures
on geodesic manifolds were constructed, assuming the (Palm) average cost to be finite
and the source to be absolutely continuous.
Stable transports between general (jointly stationary)
random measures $\xi$ and $\eta$ on $\R^d$ were
constructed and studied in \cite{OmidAli16}. If $\xi$ is diffuse
and $\eta$ is a point process, then these
transports boil down to allocations, 
see Remarks \ref{4}, \ref{6} and \ref{8}. 
\end{remark}

\bigskip
\noindent
{\bf Acknowledgments:} 
 We wish to thank Ali Khezeli for helpful discussions 
 and an anonymous referee for  a clear-sighted and constructive 
 report resulting in a thorough rewriting of parts of the paper.
 This work was supported by the German Research Foundation
(DFG) through Grant No. 
LA 965/11-1 as part of the DFG priority programme
``Random Geometric Systems''.

\end{document}